\newtheorem{theorem}{Theorem}
\newtheorem{proposition}[theorem]{Proposition}
\newtheorem{lemma}[theorem]{Lemma}
\newtheorem{definition}[theorem]{Definition}
\newtheorem{remark}[theorem]{Remark}
\definecolor{darkgreen}{rgb}{0,0.6,0}
\providecommand{\F}{\mathbb{F}}
\DeclareMathOperator{\PG}{PG}
\DeclareMathOperator{\supp}{supp}
\title{Divisible minimal codes}
\author{Vladimir Chubenko}
\affil{Amateur mathematician, Ukraine, chubenko.vl@gmail.com}
\author{Sascha Kurz}
\affil{Mathematisches Institut, Universität Bayreuth, D-95440 Bayreuth, Germany, sascha.kurz@uni-bayreuth.de}
\date{}
\begin{document}

\maketitle

\begin{abstract}
  Minimal codes are linear codes where all non-zero codewords are minimal, i.e., whose support is not properly contained in the support of another codeword. 
  The minimum possible length of such a $k$-dimensional linear code over $\mathbb{F}_q$ is denoted by $m(k,q)$. Here we determine $m(7,2)$, $m(8,2)$, and $m(9,2)$, 
  as well as full classifications of all codes attaining $m(k,2)$ for $k\le 7$ and those attaining $m(9,2)$. 
  We give improved upper bounds for $m(k,2)$ for all $10\le k\le 17$.  
  It turns out that in many cases the attaining extremal codes have the property that the weights of all codewords are divisible by some constant $\Delta>1$. So, here 
  we study the minimum lengths of minimal codes where we additionally assume that the weights of the codewords are divisible by $\Delta$. As a byproduct 
  we also give a few binary linear codes improving the best known lower bound for the minimum distance.
\end{abstract}

\section{Introduction}

Let $\mathbb{F}_q$ be a finite field of cardinality $q$ and $C\subseteq \mathbb{F}_q^n$ be a linear code. If $C$ has cardinality $q^k$, then 
we speak of an $[n,k]_q$-code. A non-zero codeword $c\in C$ is called \emph{minimal} if the \emph{support} $\supp(c):=\{i\mid c_i\neq 0\}$ of $c$ is minimal with respect to 
inclusion in the set $\left\{\supp(u) \mid u\in C\backslash\mathbf{0}\right\}$. The code $C$ is a \emph{minimal code} if all of its non-zero codewords are minimal. 
One of the many applications of minimal codes is secret sharing, see e.g.\ \cite{ashikhmin1998minimal}. An important line of research is the determination of the minimum possible length $n$ 
of a minimal $[n,k]_q$-code, which we denote by $m(k,q)$. In e.g.\ \cite[Theorem 2.14]{three_combinatorial_perspectives} the lower bound $m(k,q)\ge (q+1)(k-1)$ was shown. 
Here we determine $m(7,2)$, $m(8,2)$, and $m(9,2)$, as well as full classifications of all codes attaining $m(k,2)$ for $k\le 7$ 
and those attaining $m(9,2)$. For $m(k,2)$ we give improved upper bounds when $10\le k\le 17$. 

A linear $[n,k]_q$-code is called $\Delta$-divisible if all of its weights are divisible by $\Delta$. For some background we refer e.g.\ to the recent survey \cite{kurz2021divisible}. Minimal codes 
constructed by concatenation with simplex codes, see e.g.\ \cite{alfarano2023outer,bartoli2023small}, naturally come with a non-trivial divisibility constant $\Delta>1$. The unique example 
attaining $m(2,q)=q$, which geometrically corresponds to the points of a line, is $q$-divisible. For $k'\le 3$ all minimal 
binary codes of length $m(2k',2)$ are $2$-divisible and for dimension $k=8$ there are minimal binary codes of length $m(8,2)=24$ that are $2$-divisible while not all examples are of this type. 
In \cite{trifferent_kurz} it was shown that the unique minimal code attaining $m(5,3)=19$ is $3$-divisible. So, at least for the small parameters we have considered here there exist $q$-divisible
examples of minimum possible size $m(k,q)$ whenever the lower bound $(q-1)(k-1)+1$ on the minimum distance, see Theorem~\ref{thm_bound_summary}.(b), is divisible by $q$.\footnote{The second case 
where this condition is met, after the first $k=2$, is at dimension $k=q+2$.} We remark that also some constructions for minimal codes are based on few-weight codes, which often have a non-trivial 
divisibility constant, see e.g.\ \cite{mesnager2019several,shi2020several,shi2021two}. Due to the mentioned possible relations between minimal and divisible codes we introduce the minimum possible 
length $n=m(k,q;\Delta)$ of a $\Delta$-divisible minimal $[n,k]_q$-code. Here we initiate the study of $m(k,q;\Delta)$ and give bounds and exact values, both computationally and theoretically.

The remaining part of this paper is structured as follows. In Section~\ref{sec_preliminaries} we state the necessary preliminaries before we study bounds and exact values for $m(k,q;\Delta)$ 
in Section~\ref{sec_div}. For the special case of binary minimal codes with trivial divisibility $\Delta=1$ we study the minimum possible length $m(k,2;1)=m(k,2)$ in Section~\ref{sec_binary}.  

\section{Preliminaries}
\label{sec_preliminaries}

First we consider the well-known correspondence between (non-degenerated) $[n,k]_q$-codes and 
multisets of points in the projective space $\operatorname{PG}(k-1,q)$ of cardinality $n$, i.e., the columns of a 
generator matrix each generate a point, see e.g.\ \cite{dodunekov1998codes}. We represent each multiset of points in $\operatorname{PG}(v-1,q)$ 
by a mapping $M\colon\mathcal{P}\to\mathbb{N}_{\ge 0}$ from the set of points $\mathcal{P}$ in $\operatorname{PG}(v-1,q)$ to the non-negative integers, 
i.e., to each point $P$ we assign a multiplicity $M(P)$. We extend this notion to arbitrary subspaces $S$ by defining $M(S)$ as the sum over all 
point multiplicities $M(P)$ for all points $P$ in $S$. The cardinality of $M$, i.e., the sum of the multiplicities of all points, is denoted by $\# M$. 
We say that a multiset $M$ of points is \emph{spanning} if the points with positive multiplicity span the entire ambient space. 
\begin{definition}
  A multiset $M$ of points in a projective space is called a \emph{strong blocking multiset} if for every
  hyperplane $H$, we have $\langle S \cap H\rangle = H$.
\end{definition}
If $M$ is the multiset of points associated to a linear code $C$, then $C$ is minimal iff $M$ is a strong blocking multiset, see e.g.\ \cite{alfarano2022geometric,tang2021full}. 
Directly from the definition of a strong blocking multiset we can read off that a multiset of points in $\operatorname{PG(1,q)}$ is a strong blocking multiset 
iff it contains every point of the entire projective space. Clearly adding points to a multiset does not destroy the property of being a strong blocking multiset, 
so that we consider \emph{minimal strong blocking sets} in the following, i.e., set of points that are a strong blocking multiset but such that every proper 
subset is not a strong blocking multiset. So, in $\PG(1,q)$ the unique minimal strong blocking set is a line, so that 
\begin{equation}
  m(2,q)=q.
\end{equation}   
Since each linear code associated to the point set of a $k$-dimensional subspace over $\mathbb{F}_q$ is $q$-divisible, see e.g.\ \cite[Lemma 2.a]{kiermaier2020lengths}, 
we have
\begin{equation}
  m(2,q;q)=q  
\end{equation}
for each positive integer $\Delta$. For dimension $k=1$ we clearly have $m(1,q)=1$ and $m(1,q;\Delta)=\Delta$ for all $\Delta\in\mathbb{N}_{\ge 1}$.  

The representation of a linear code $C$ by a multiset of points $M$ is pretty useful. If we multiply the multiplicity $M(P)$ of every 
point $P$ by some positive integer $t$, the cardinality as well as the divisibility is increased by a factor of $t$. So, we have
\begin{equation}
  m(k,q)\le m(k,q;\Delta)\le \Delta \cdot m(k,q)
\end{equation}  
for all $\Delta\in \mathbb{N}_{\ge 1}$. Our examples for dimensions $1$ and $2$ show that both bounds can be attained with equality. Similarly, we have
\begin{equation}
  m(k,q;\Delta)\le m(k,q;t\cdot \Delta)\le  t\cdot m(k,q;\Delta)
\end{equation}
for all $\Delta,t\in \mathbb{N}_{\ge 1}$. If $t$ is coprime to $q$, then a $t$-divisible linear code over $\mathbb{F}_q$ is a $t$-fold repetition of a smaller code, 
see e.g.\ \cite[Theorem 1]{ward1981divisible}. So, we have
\begin{equation}
  \label{eq_ward}
  m(k,q;t\cdot \Delta)=  t\cdot m(k,q;\Delta)
\end{equation}
for all $t\in\mathbb{N}_{\ge 1}$ with $\gcd(q,t)=1$. For binary codes we can consider extension by a parity bit to conclude
\begin{equation}
  m(k,2;2)\le m(k,2;1)+1.
\end{equation} 

\medskip

Given a linear code $C$ the \emph{weight} $\operatorname{wt}(c)$ of a codeword $c\in C$ is the number of non-zero entries. With this, the minimum Hamming distance $d$ 
of $C$ is the minimum weight over all non-zero codewords of $C$. If an $[n,k]_q$-code has minimum Hamming distance $d$ then we also speak of an $[n,k,d]_q$-code. 
The polynomial $\sum_{c\in C} x^{\operatorname{wt}(c)}$ is called the \emph{weight enumerator} of $C$.
We summarize the current knowledge on general bounds for the length $n$, the minimum (non-zero) weight $w_{\min}$, and the maximum (non-zero) weight $w_{\max}$ of 
a minimal linear code as follows:
\begin{theorem}
  \label{thm_bound_summary}
  For each minimal $[n,k]_q$-code we have
  \begin{enumerate}
    \item[(a)] $n\ge(q+1)(k-1)$;
    \item[(b)] $d=w_{\min}\ge  (k-1)(q-1)+1$; and 
    \item[(c)] $w_{\max}\le n-k+1$.
  \end{enumerate}
\end{theorem}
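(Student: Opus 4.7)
Part (c) is immediate from the geometric correspondence. Minimality of $C$ is equivalent to the associated multiset $M \subseteq \PG(k-1,q)$ being strong blocking, i.e., $\langle M \cap H\rangle = H$ for every hyperplane $H$. Spanning the $(k-1)$-dimensional subspace $H$ requires at least $k-1$ distinct points of $M$ in $H$, hence $M(H) \ge k-1$, and the codeword corresponding to $H$ has weight $n - M(H) \le n - k + 1$.

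For part (b), I would fix a codeword $c$ of minimum weight $d$ with support $S$. The key preliminary observation is that $\{c' \in C : \supp(c') \subseteq S\} = \langle c\rangle$: if some $c'$ with $\supp(c') \subseteq S$ were linearly independent of $c$, then for any $i \in S$ with $c'(i) \neq 0$ the nonzero codeword $c - (c(i)/c'(i))c'$ would have support strictly contained in $S$, contradicting minimality. Next, for any $c' \in C \setminus \langle c\rangle$, a position-by-position count gives the identity
\[\sum_{\alpha \in \mathbb{F}_q} w(c - \alpha c') = (q-1)w(c') + qd - q\,|S \cap \supp(c')|,\]
and since $c$ and $c'$ are independent every term $c - \alpha c'$ is a nonzero codeword of weight at least $d$, so the sum is at least $qd$. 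Rearranging yields the local intersection bound $|S \cap \supp(c')| \le \tfrac{q-1}{q} w(c')$. The main obstacle is upgrading this into the global lower bound $d \ge (k-1)(q-1)+1$; I expect this to require a more delicate geometric count on the complement $M \setminus H_c$, where one fixes a basis $P_1,\ldots,P_{k-1}$ of $M \cap H_c$ and exploits strong blocking on each of the $q$ hyperplanes through $V_i = \langle P_j : j \neq i\rangle$ distinct from $H_c$, refined beyond the naive $(k-1)q$/$(k-1)$ pigeonhole count.

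For part (a), I would induct on $k$. The base case $k = 2$ is established in the preliminaries: a strong blocking multiset of $\PG(1,q)$ must contain every one of the $q+1$ projective points. For the inductive step, pick any point $P$ with $M(P) > 0$ and form the projected multiset $\bar M$ on $\PG(k-2,q) \cong \PG(k-1,q)/\langle P\rangle$, defined by $\bar M(\bar \ell) = M(\ell) - M(P)$ for each line $\ell$ through $P$. Hyperplanes of the quotient correspond bijectively to hyperplanes of $\PG(k-1,q)$ through $P$, and the spanning condition $\langle M \cap H\rangle = H$ descends modulo $\langle P\rangle$ to $\langle \bar M \cap \bar H\rangle = \bar H$, so $\bar M$ is strong blocking in $\PG(k-2,q)$. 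Since $\#\bar M = n - M(P)$, induction yields $n \ge (q+1)(k-2) + M(P)$. The main difficulty is closing the residual gap of $q+1 - M(P)$ when no point carries multiplicity at least $q+1$; here one must choose $P$ more carefully (for instance, on a hyperplane that meets $M$ in a basis of exactly $k-1$ points) and extract additional incidence information, as in the geometric argument of \cite{three_combinatorial_perspectives}.
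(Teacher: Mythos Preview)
The paper does not give its own proof of this theorem; it simply cites \cite{three_combinatorial_perspectives} and \cite{heger2021short} for parts (a)--(c).  So the comparison is between your plan and the arguments in those references.

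Your argument for (c) is complete and correct, and is exactly the standard one: strong blocking forces $M(H)\ge k-1$ for every hyperplane $H$, whence $w_{\max}\le n-k+1$.

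For (b), your preliminary steps are right, and the geometric picture you sketch is pointing in the correct direction, but the ``refined pigeonhole'' you propose is not what finishes the argument.  The observation you need is that for any hyperplane $H$, the multiset $M\setminus H$ meets \emph{every} hyperplane $H'\neq H$ (since $M\cap H'$ spans $H'$ and hence cannot lie in $H\cap H'$).  This says precisely that the columns indexed by $\supp(c)$ form an affine blocking set in $\AG(k-1,q)$, and the lower bound $(k-1)(q-1)+1$ on the size of such a set is the theorem of Jamison/Brouwer--Schrijver, proved by the polynomial method.  No elementary double-counting over the $V_i$ recovers this bound; Jamison's theorem is the genuine missing ingredient, and it is the content of the proof in \cite{heger2021short}.

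For (a), your projection framework is the right starting point and correctly yields $n\ge (q+1)(k-2)+M(P)$, but the ``choose $P$ more carefully'' step does not close the gap: when $M$ is a set every point has $M(P)=1$, and restricting attention to a hyperplane that meets $M$ in exactly $k-1$ points does not by itself supply the missing $q$.  The argument in \cite{three_combinatorial_perspectives} requires an additional idea beyond the bare induction---essentially combining the projection with (b) or with a count over a pencil of hyperplanes---and this extra step is not visible in your outline.  As written, both (a) and (b) have real gaps rather than routine details.
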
 
\begin{proof}
  For (a) see e.g.\ \cite[Theorem 2.14]{three_combinatorial_perspectives}, for (b) see e.g.\ \cite[Theorem 23]{heger2021short} or 
\cite[Theorem 2.8]{three_combinatorial_perspectives}, and for (c) see \cite[Proposition 1.5]{three_combinatorial_perspectives}. 
\end{proof}

\medskip

A linear code $C$ is called \emph{quasi-cyclic of index $l$} if the shift of $l$ positions to the right of every codeword is also a codeword, see e.g.\ \cite{ling2001algebraic}. 
The case $l=1$ corresponds to \emph{cyclic} codes.  A \emph{circulant matrix} is a square matrix of the form
$$
  G=\begin{pmatrix}
    g_0 & g_1 & \dots & g_{k-1} \\ 
    g_{k-1} & g_0 & \dots & g_{k-2} \\ 
    \vdots & \vdots & \ddots & \vdots \\ 
    g_1 & g_2 & \dots & g_0
  \end{pmatrix}
$$ 
and its \emph{associated polynomial} is given by $g(x)=g_0+g_1x+\dots+g_{k-1}x^{k-1}$, see e.g.\ \cite{kra2012circulant}. A circulant matrix $G$ generates a $[k,k']$ code, where $1\le k'\le k$. 
For circulant matrices $G_1,\dots,G_l$ the matrix $\begin{pmatrix}G_1&\dots&G_l\end{pmatrix}$ generates a quasi-cyclic code of index $l$, length $lk$, and dimension at most $k$. Reordering 
the matrices $G_i$ results in equivalent codes and every quasi-cyclic code admits such a representation. If e.g.\ $G_1$ has full rank, then there exist circulant matrices $G_2',\dots,G_l'$ 
such that $\begin{pmatrix}I&G_2'&\dots&G_l'\end{pmatrix}$ is a generator matrix of the same code. Heuristically, the assumption that at least one of the circulant matrices has full rank does 
not seem to exclude codes with good parameters, see e.g.\ \cite{heijnen1998some}. For $l=2$ one also speaks of a \emph{double circulant code}, see e.g.\ \cite[Chapter 16]{macwilliams1977theory}. 
Here we want to have a little bit more flexibility. 
\begin{definition}
Let $g\in\mathbb{F}_q^s$ and $u,v$ be positive integers that are divisible by $s$. A $u\times v$ circulant matrix with generator $g$ is a matrix $G\in\mathbb{F}_q^{u\times v}$ whose 
first row consists of $v/s$ copies of $g$ and every other row is obtained by a cyclic right shift of the row directly above it. 
\end{definition}
As an example, a $4\times 6$ circulant matrix over $\F_2$ with generator $\begin{pmatrix}1&0\end{pmatrix}$ is given by
$$
  \begin{pmatrix}
    10\,\,10\,\,10\\ 
    01\,\,01\,\,01\\[0.4mm]
    10\,\,10\,\,10\\ 
    01\,\,01\,\,01
  \end{pmatrix}\!,  
$$
where we have visualized the $2\times 2$ submatrices which a circulant with generator $g$. I.e.\ those submatrices are copied $v/s$ times to the right and $u/s$ times to the bottom.
\begin{definition}
  \label{def_generalized_circulant_matrix}
  A \emph{generalized circulant matrix of type $(\mathbf{\alpha},\mathbf{\beta},t)$} is a matrix of the form
  $$
    G=\begin{pmatrix}
      G_{11} & G_{12} & \dots  & G_{1b} \\ 
      G_{21} & G_{22} & \dots  & G_{2b} \\ 
      \vdots & \vdots & \ddots & \vdots \\ 
      G_{a1} & G_{a2} & \dots  & G_{ab} 
    \end{pmatrix}\!,
  $$
  where the $G_{ij}$ are $\left(u_{ij}\times v_{ij}\right)$ circulant matrices with generator $g_{ij}\in\mathbb{F}_2^{s_{ij}}$ such that
  \begin{itemize}
    \item all $u_{ij}\!$'s, $v_{ij}\!$'s, $s_{ij}\!$'s are divisors of $t$ and $s_{ij}=t$ occurs at least once;
    \item the number of rows $u_{ij}$ of $G_{ij}$ is the same for all $j$;
    \item the number of columns $v_{ij}$ of $G_{ij}$ is the same for all $i$;
    \item $\alpha=1^{\alpha_1} \dots t^{\alpha_t}$ and $u_{i1}=l$ occurs $\alpha_l$ times for all $1\le l\le t$; and
    \item $\beta=1^{\beta_1} \dots t^{\beta_t}$ and $v_{1j}=l$ occurs $\beta_l$ times for all $1\le l\le t$.
  \end{itemize}
  Moreover, we call $G$ \emph{systematic} if it starts with a full unit matrix.
\end{definition}

As an example we consider 
$$
  G=\begin{pmatrix}
  \underline{1}\,\,\underline{0}00000\,\,\underline{0}00000\,\,\underline{1}\,\,\underline{0}00000\,\,\underline{1}11111\,\,\underline{1}11111\\[0.5mm]
  \underline{0}\,\,\underline{100000}\,\,\underline{000000}\,\,\underline{1}\,\,\underline{001011}\,\,\underline{000101}\,\,\underline{001011}\\
  0\,\,010000\,\,000000\,\,1\,\,100101\,\,100010\,\,100101\\
  0\,\,001000\,\,000000\,\,1\,\,110010\,\,010001\,\,110010\\
  0\,\,000100\,\,000000\,\,1\,\,011001\,\,101000\,\,011001\\
  0\,\,000010\,\,000000\,\,1\,\,101100\,\,010100\,\,101100\\
  0\,\,000001\,\,000000\,\,1\,\,010110\,\,001010\,\,010110\\[0.5mm]
  \underline{0}\,\,\underline{000000}\,\,\underline{100000}\,\,\underline{0}\,\,\underline{000111}\,\,\underline{001011}\,\,\underline{111011}\\
  0\,\,000000\,\,010000\,\,0\,\,100011\,\,100101\,\,111101\\
  0\,\,000000\,\,001000\,\,0\,\,110001\,\,110010\,\,111110\\
  0\,\,000000\,\,000100\,\,0\,\,111000\,\,011001\,\,011111\\
  0\,\,000000\,\,000010\,\,0\,\,011100\,\,101100\,\,101111\\
  0\,\,000000\,\,000001\,\,0\,\,001110\,\,010110\,\,110111
  \end{pmatrix}\!,
$$
where the generators are underlined, $\alpha=1^1 6^2$, $\beta=1^2 6^5$, $t=5$, and $G$ is systematic. $G$ generates a $[32,13,10]_2$-code with 
weight enumerator $1+346x^{10}+860x^{12}+1636x^{14}+2405x^{16}+1840x^{18}+796x^{20}+268x^{22}+34x^{24}+6x^{26}$ that is indeed optimal. We remark 
that a $[32,13,10]_2$-code was first found by Shearer using a computer search, see \cite{brouwer1993updated}. Using the \texttt{Magma} command \texttt{BestKnownLinearCode} 
a corresponding generator matrix can be retrieved that generates a $[32,13,10]_2$-code with weight enumerator $1+348x^{10}+853x^{12}+1641x^{14}+2418x^{16}
+1805x^{18}+839x^{20}+235x^{22}+49x^{24}+3x^{26}$ and a trivial automorphism group.    

We neither claim that the notion of generalized circulant matrices is new nor that the chosen name is optimal. There is a vast literature on quasi-cyclic codes and different 
shapes with several circulant matrices have been studied, see e.g.\ \cite{esmaeili2009generalized}. For generalized quasi-cyclic codes we refer to e.g.\ \cite{guneri2017structure,muchtadi2022generalized} 
and the references therein. Our notion of a generalized circulant matrix in Definition~\ref{def_generalized_circulant_matrix} allows us to describe many of our newly discovered codes. 
On the other hand more restricted classes allow computational non-existence results. E.g.\ there does not exist a double circulant even $[40, 20, 10]_2$-code \cite{gulliver1997classification}.   

There is a another point of view how generalized circulant matrices can be described. For given field size $q$, dimension $k$, and length $n$ let $1\le t\le k$ be an integer and 
$\alpha_1,\dots,\alpha_t$ be non-negative integers such that $\sum_{i\,:\,i\vert t} \alpha_i=k$ and $\alpha_t\ge 1$. With this let $\pi$ be a permutation of $\{1,\dots,k\}$ with $\alpha_i$ cycles of length $i$. Similarly, let 
$\beta_i$ be non-negative integers such that $\sum_{i\,:\,i\vert t} \beta_i=n$, $\beta_t\ge 1$,  and $\varphi$ be a permutation of $\{1,\dots,n\}$ with $\beta_i$ cycles of length $i$. As an example we consider
$q=2$, $k=13$, $n=32$, $t=6$, $\alpha_1=1$, and $\alpha_6=2$, so that we can choose $\pi=(1)(2,3,4,5,6,7)(8,9,10,11,12,13)$. The action of $\pi$ on the elements of $\mathbb{F}_q^k$ can also be 
described by the multiplication with a matrix $M_\pi\in\mathbb{F}_q^{k\times k}$. In our example we have 
$$
  M_\pi=\begin{pmatrix}
    1\,\,000000\,\,000000\\[1mm] 
    0\,\,010000\,\,000000\\
    0\,\,001000\,\,000000\\
    0\,\,000100\,\,000000\\
    0\,\,000010\,\,000000\\
    0\,\,000001\,\,000000\\
    0\,\,100000\,\,000000\\[1mm]
    0\,\,000000\,\,010000\\
    0\,\,000000\,\,001000\\
    0\,\,000000\,\,000100\\
    0\,\,000000\,\,000010\\
    0\,\,000000\,\,000001\\
    0\,\,000000\,\,100000
  \end{pmatrix}
$$    
Using the geometric interpretation of a linear code $C$ as a multiset of point $M$ in $\PG(k-1,q)$, the action of $M_\pi$ partitions the set of points, as well as the 
set of hyperplanes, of $\PG(k-1,q)$ into orbits, whose lengths are divisors of $t$. Assuming the prescribed automorphism $M_\pi$ it is sufficient to state a representant of each 
chosen point orbit, which corresponds to a column for each block of the generator matrix $G$. The underlined generators in $G$ are just another parameterization. Note 
that $\langle M_\pi\rangle$ is a cyclic group and it is a common approach to search linear codes with good parameters by prescribing some group as a subgroup of the automorphism 
group as solutions of an integer linear problem, since both the number of variables and constraints is reduced, see e.g.\ \cite{braun2005optimal}. If we loop over suitable 
candidates for the generators in a generalized circulant matrix we can use the group action to partition the possible generators into orbits and also to restrict the minimum 
distance computations to codeword orbits. The condition $\beta_t\ge 1$ ensures that the corresponding codes have a cyclic automorphism of order $t$. In our example we have 
$\varphi=(1)(234567)(8\dots13)(14)(15\dots20)(21\dots26)(27\dots32)$. For more details on codes with a given automorphism and the relation to generalized quasi-cyclic codes we 
refer to \cite{bouyuklieva2024structure}.  

In Section~\ref{sec_binary} we will use the structure of generalized circulant matrices to find improved upper bounds for $m(k,2)$. As a spin-off of the underlying 
computer searches we also find a few codes improving upon the best known linear codes, see e.g.\ \cite{Grassl:codetables}. The following three matrices are generator matrices of $[50,20,13]_2$-, 
$[52,21,13]_2$-, and $[56,24,13]$-codes. respectively. 
$$
  \left(\begin{smallmatrix}
  10000000000000000000\,\,0101111111\,\,0001001101\,\,0000000111\\
  01000000000000000000\,\,1011111110\,\,0010011010\,\,0000001110\\
  00100000000000000000\,\,0111111101\,\,0100110100\,\,0000011100\\
  00010000000000000000\,\,1111111010\,\,1001101000\,\,0000111000\\
  00001000000000000000\,\,1111110101\,\,0011010001\,\,0001110000\\
  00000100000000000000\,\,1111101011\,\,0110100010\,\,0011100000\\
  00000010000000000000\,\,1111010111\,\,1101000100\,\,0111000000\\
  00000001000000000000\,\,1110101111\,\,1010001001\,\,1110000000\\
  00000000100000000000\,\,1101011111\,\,0100010011\,\,1100000001\\
  00000000010000000000\,\,1010111111\,\,1000100110\,\,1000000011\\[0.5mm]
  00000000001000000000\,\,1101100010\,\,0001010011\,\,0000100101\\
  00000000000100000000\,\,1011000101\,\,0010100110\,\,0001001010\\
  00000000000010000000\,\,0110001011\,\,0101001100\,\,0010010100\\
  00000000000001000000\,\,1100010110\,\,1010011000\,\,0100101000\\
  00000000000000100000\,\,1000101101\,\,0100110001\,\,1001010000\\
  00000000000000010000\,\,0001011011\,\,1001100010\,\,0010100001\\
  00000000000000001000\,\,0010110110\,\,0011000101\,\,0101000010\\
  00000000000000000100\,\,0101101100\,\,0110001010\,\,1010000100\\
  00000000000000000010\,\,1011011000\,\,1100010100\,\,0100001001\\
  00000000000000000001\,\,0110110001\,\,1000101001\,\,1000010010\\
  \end{smallmatrix}\right)  
$$
$$
  \left(\begin{smallmatrix}
  100000000000000000000\,\,0000010111\,\,0011001101\,\,0101011011\,\,1\\
  010000000000000000000\,\,0000101110\,\,0110011010\,\,1010110110\,\,1\\
  001000000000000000000\,\,0001011100\,\,1100110100\,\,0101101101\,\,1\\
  000100000000000000000\,\,0010111000\,\,1001101001\,\,1011011010\,\,1\\
  000010000000000000000\,\,0101110000\,\,0011010011\,\,0110110101\,\,1\\
  000001000000000000000\,\,1011100000\,\,0110100110\,\,1101101010\,\,1\\
  000000100000000000000\,\,0111000001\,\,1101001100\,\,1011010101\,\,1\\
  000000010000000000000\,\,1110000010\,\,1010011001\,\,0110101011\,\,1\\
  000000001000000000000\,\,1100000101\,\,0100110011\,\,1101010110\,\,1\\
  000000000100000000000\,\,1000001011\,\,1001100110\,\,1010101101\,\,1\\[0.5mm]
  000000000010000000000\,\,0011111011\,\,0011110101\,\,0111010100\,\,1\\
  000000000001000000000\,\,0111110110\,\,0111101010\,\,1110101000\,\,1\\
  000000000000100000000\,\,1111101100\,\,1111010100\,\,1101010001\,\,1\\
  000000000000010000000\,\,1111011001\,\,1110101001\,\,1010100011\,\,1\\
  000000000000001000000\,\,1110110011\,\,1101010011\,\,0101000111\,\,1\\
  000000000000000100000\,\,1101100111\,\,1010100111\,\,1010001110\,\,1\\
  000000000000000010000\,\,1011001111\,\,0101001111\,\,0100011101\,\,1\\
  000000000000000001000\,\,0110011111\,\,1010011110\,\,1000111010\,\,1\\
  000000000000000000100\,\,1100111110\,\,0100111101\,\,0001110101\,\,1\\
  000000000000000000010\,\,1001111101\,\,1001111010\,\,0011101010\,\,1\\[0.5mm]
  000000000000000000001\,\,1111111111\,\,1111111111\,\,1111111111\,\,1
  \end{smallmatrix}\right)  
$$
$$
  \left(\begin{smallmatrix}
  100000000000000000000000\,\,00000111\,\,00001011\,\,00011111\,\,00101111\\
  010000000000000000000000\,\,00001110\,\,00010110\,\,00111110\,\,01011110\\
  001000000000000000000000\,\,00011100\,\,00101100\,\,01111100\,\,10111100\\
  000100000000000000000000\,\,00111000\,\,01011000\,\,11111000\,\,01111001\\
  000010000000000000000000\,\,01110000\,\,10110000\,\,11110001\,\,11110010\\
  000001000000000000000000\,\,11100000\,\,01100001\,\,11100011\,\,11100101\\
  000000100000000000000000\,\,11000001\,\,11000010\,\,11000111\,\,11001011\\
  000000010000000000000000\,\,10000011\,\,10000101\,\,10001111\,\,10010111\\[0.5mm]
  000000001000000000000000\,\,10100001\,\,10011000\,\,01101110\,\,10101001\\
  000000000100000000000000\,\,01000011\,\,00110001\,\,11011100\,\,01010011\\
  000000000010000000000000\,\,10000110\,\,01100010\,\,10111001\,\,10100110\\
  000000000001000000000000\,\,00001101\,\,11000100\,\,01110011\,\,01001101\\
  000000000000100000000000\,\,00011010\,\,10001001\,\,11100110\,\,10011010\\
  000000000000010000000000\,\,00110100\,\,00010011\,\,11001101\,\,00110101\\
  000000000000001000000000\,\,01101000\,\,00100110\,\,10011011\,\,01101010\\
  000000000000000100000000\,\,11010000\,\,01001100\,\,00110111\,\,11010100\\[0.5mm]
  000000000000000010000000\,\,01000110\,\,10001111\,\,11101001\,\,10001101\\
  000000000000000001000000\,\,10001100\,\,00011111\,\,11010011\,\,00011011\\
  000000000000000000100000\,\,00011001\,\,00111110\,\,10100111\,\,00110110\\
  000000000000000000010000\,\,00110010\,\,01111100\,\,01001111\,\,01101100\\
  000000000000000000001000\,\,01100100\,\,11111000\,\,10011110\,\,11011000\\
  000000000000000000000100\,\,11001000\,\,11110001\,\,00111101\,\,10110001\\
  000000000000000000000010\,\,10010001\,\,11100011\,\,01111010\,\,01100011\\
  000000000000000000000001\,\,00100011\,\,11000111\,\,11110100\,\,11000110  
  \end{smallmatrix}\right)  
$$
These three codes imply the further improvements $[49,19,13]_2$, $[50,19,14]_2$, 
$[51,20,14]_2$, $[53,21,14]_2$, and $[57,24,14]_2$. For other recently improved linear codes based on circulant 
matrices we refer e.g.\ to \cite{yu2025construction}.

\begin{remark}
  As point out by Markus Grassl \cite{Grassl:codetables}, there are more compact description of the three stated improved binary linear codes. The $[50,20,13]_2$ and the $[56,24,13]_2$ 
  codes are quasi-cyclic codes while the $[52,21,13]_2$ code can be obtained by applying Construction XX \cite{alltop2003method} to quasi-cyclic codes. More precisely, the 
  $[50,20,13]_2$ code is a quasi-cyclic code of length $50$ stacked to height $2$ with generating polynomials $1$,  $0$, $x^9 + x^8 + x^7 + x^6 + x^5 + x^3 + x^2 + x$,  
  $x^9 + x^3 + x^2 + x$, $x^9 + x^6 + x^3$,  $0$,  $1$,  $x^8 + x^7 + x^6 + x + 1$,  $x^6 + x^5 + x^3 + x$,  $x^9 + x^8 + x^3$ and the $[56,24,13]_2$ code is a 
  quasi-cyclic code of length $56$ stacked to height $3$ with generating polynomials $1$,  $0$,  $0$,  $x^3 + x^2 + x$,  $x^4 + x^2 + x$,  $x^5 + x^4 + x^3 + x^2 + x$,  $x^6 + x^4 + x^3 + x^2 + x$,  
  $0$,  $1$,  $0$,  $x^6 + x + 1,  x^5 + x^4 + 1$,  $x^7 + x^6 + x^4 + x^3 + x^2$,  $x^6 + x^4 + x + 1$,  $0$,  $0$,  $1$,  $x^7 + x^3 + x^2$,  $x^4 + x^3 + x^2 + x + 1$,  
  $x^7 + x^6 + x^4 + x + 1,  x^4 + x^3 + x + 1$.
\end{remark}

\section{Minimum lengths of divisible minimal codes}
\label{sec_div}

In this section we consider the determination of the smallest possible length $n=m(k,q;\Delta)$ of a minimal $\Delta$-divisible $[n,k]_q$-code. For dimensions $k\le 2$ the 
results are stated easily using the geometric reformulation of linear codes as multisets of points. Clearly, we have $m(1,q;\Delta)=\Delta$ attained by a $\Delta$-fold point. 
For dimension $k=2$ each point has multiplicity at least $1$ since the code has to be minimal. From $\Delta$-divisibility we conclude that the point multiplicities are pairwise congruent
modulo $\Delta$, so that the minimum possible length is attained if all point multiplicities are equal. Thus, we have $m(2,q;\Delta)=\tfrac{(q+1)\Delta}{q}$ if $\Delta$ is divisible by $q$ 
(attained by a $\Delta/q$-fold line) and $m(2,q;\Delta)=(q+1)\Delta$ (attained by a $\Delta$-fold line). Due to Equation~(\ref{eq_ward}) it suffices to consider the cases where $\Delta$ does 
not contain a non-trivial factor $t$ that is coprime to the field size $q$.
 
If the divisibility constant is large enough, when considering power of the characteristic 
only, we can give a precise answer:
\begin{proposition}
  For $r\ge k-1$ we have $m\!\left(k,q;q^r\right)=q^{r-k+1}\cdot \frac{q^k-1}{q-1}$.
\end{proposition}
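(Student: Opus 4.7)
The plan is to prove $m(k,q;q^r)=q^{r-k+1}\cdot(q^k-1)/(q-1)$ by matching upper and lower bounds.

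For the upper bound I would exhibit the $q^{r-k+1}$-fold repetition of the $k$-dimensional simplex $[(q^k-1)/(q-1),k,q^{k-1}]_q$-code; the repetition factor is a positive integer precisely because $r\ge k-1$. The resulting code has constant weight $q^{r-k+1}\cdot q^{k-1}=q^r$, so it is $q^r$-divisible. Any constant-weight linear code is minimal, since two nonzero codewords of equal weight cannot have one support strictly contained in the other. Its length is $q^{r-k+1}\cdot(q^k-1)/(q-1)$, giving the claimed upper bound.

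For the lower bound, let $C$ be any $q^r$-divisible minimal $[n,k]_q$-code, so every nonzero weight of $C$ lies in $\{q^r,2q^r,3q^r,\dots\}$. I would split on the maximum weight $w_{\max}$. If $w_{\max}=q^r$, then $C$ has constant weight $q^r$, and Bonisoli's classification of constant-weight linear codes forces $C$ to be equivalent to an $s$-fold repetition of a simplex code with $s\cdot q^{k-1}=q^r$; hence $s=q^{r-k+1}$ and $n=q^{r-k+1}\cdot(q^k-1)/(q-1)$, matching the target exactly. If instead $w_{\max}\ge 2q^r$, then Theorem~\ref{thm_bound_summary}(c) yields $n\ge w_{\max}+k-1\ge 2q^r+k-1$, and the estimate
\[
q^{r-k+1}\cdot\tfrac{q^k-1}{q-1}=q^r+q^{r-1}+\dots+q^{r-k+1}<\tfrac{q^{r+1}}{q-1}\le 2q^r\le 2q^r+k-1
\]
closes the case with strict inequality.

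The key non-trivial ingredient is Bonisoli's classification of constant-weight linear codes, which pins down the extremal case; combined with the Singleton-type bound Theorem~\ref{thm_bound_summary}(c) it forces every other $q^r$-divisible minimal code to be strictly longer. A minor technical point is that one may assume $C$ is non-degenerate (no zero coordinates) when looking for the minimum length, since deleting zero coordinates only shortens the code while preserving $q^r$-divisibility and minimality, so that Bonisoli's theorem applies directly in its standard form.
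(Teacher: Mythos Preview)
Your proof is correct, but it takes a genuinely different route from the paper's. The paper's argument is a one-liner: since the code is $q^r$-divisible, its minimum distance satisfies $d\ge q^r$, and the Griesmer bound $n\ge\sum_{i=0}^{k-1}\lceil d/q^i\rceil$ then gives $n\ge\sum_{i=0}^{k-1}q^{r-i}=q^{r-k+1}\cdot\tfrac{q^k-1}{q-1}$ directly (here $r\ge k-1$ ensures each summand is an integer). The upper bound is the same in both proofs.

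Your approach avoids the Griesmer bound entirely, instead splitting on $w_{\max}$ and invoking Bonisoli's classification of one-weight codes in the extremal case, then using the paper's Theorem~\ref{thm_bound_summary}(c) to dispose of $w_{\max}\ge 2q^r$. The Griesmer route is shorter and uses only the minimum distance, not the structure of one-weight codes. On the other hand, your argument has an extra payoff the paper's does not: because the $w_{\max}\ge 2q^r$ branch yields a \emph{strict} inequality, you implicitly prove that every minimal $q^r$-divisible code attaining $m(k,q;q^r)$ must be the $q^{r-k+1}$-fold simplex code, i.e.\ the extremal code is unique up to equivalence. The paper's Griesmer argument, as written, does not immediately give this uniqueness.
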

\begin{proof}
  Since the code is $q^r$-divisible we have $d\ge q^r$, so that we can apply the Griesmer bound for the lower bound. An attaining example is given by the $q^{r-k+1}$-fold full $k$-space.
\end{proof}  
\begin{proposition}
  \label{prop_div_dim_one_less}
  For $k\ge 2$ we have $m\!\left(k,2;2^{k-2}\right)=2^k-1$.
\end{proposition}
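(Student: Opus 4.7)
The upper bound $m(k,2;2^{k-2})\le 2^k-1$ is witnessed by the simplex $[2^k-1,k,2^{k-1}]_2$-code: it has constant weight $2^{k-1}$, so it is $2^{k-1}$-divisible (in particular $2^{k-2}$-divisible), and the constant-weight property automatically makes it minimal.

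For the lower bound, let $C$ be a minimal $2^{k-2}$-divisible $[n,k]_2$-code with minimum distance $d$. Since $d$ is a positive multiple of $2^{k-2}$ and $d\ge k$ by Theorem~\ref{thm_bound_summary}(b), we obtain $d\ge 2^{k-1}$ automatically for $k=2$ (where $d\ge 2$) and for $k=3$ (where $d\ge 3$ and $2\mid d$ force $d\ge 4$). Once $d\ge 2^{k-1}$ has been established, the Griesmer bound yields
$$
  n \;\ge\; \sum_{i=0}^{k-1}\left\lceil 2^{k-1}/2^i\right\rceil \;=\; 2^k-1,
$$
as required.

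The remaining case is $k\ge 4$ with $d=2^{k-2}$, which we need to rule out. Pick two distinct weight-$2^{k-2}$ codewords $c,c'$ with supports $S,S'$. Divisibility of $\mathrm{wt}(c+c')=2\cdot 2^{k-2}-2|S\cap S'|$ by $2^{k-2}$ forces $|S\cap S'|\in\{0,2^{k-3}\}$; the value $0$ violates the minimality of $c+c'$ (whose support would then strictly contain $S$), so $|S\cap S'|=2^{k-3}$ and hence $\mathrm{wt}(c+c')=2^{k-2}$. Consequently $W:=\{0\}\cup\{\text{weight-}2^{k-2}\text{ codewords of }C\}$ is closed under addition, so $W$ is a constant-weight-$2^{k-2}$ linear subspace of $C$ of some dimension $l$. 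By Bonisoli's classification of binary constant-weight codes, $W$ is equivalent to a $2^{k-l-1}$-fold replicated simplex code on $2^{k-l-1}(2^l-1)$ coordinates (so in particular $l\le k-1$), and the number $m_1:=2^l-1$ of weight-$2^{k-2}$ codewords is odd whenever $m_1\ge 1$.

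For $k=4$ this already yields a contradiction: Theorem~\ref{thm_bound_summary}(c) restricts the non-zero weights to $\{4,8\}$, so the weight-enumerator identities $m_1+m_2=15$ and $m_1+2m_2=2n$ give $m_1=30-2n$, forcing $m_1$ to be even and hence $m_1=0$, contrary to $d=4$. For $k\ge 5$ the plan is analogous but more delicate: Theorem~\ref{thm_bound_summary}(c) now only restricts the non-zero weights to $\{2^{k-2},2^{k-1},3\cdot 2^{k-2}\}$ when $n<2^k-1$, adding a third unknown $m_3$ to the weight-enumerator equations. To close the gap one picks $c^*\in C\setminus W$ (which exists because $l\le k-1<k$) and uses the forced overlap pattern $|\mathrm{supp}(c^*)\cap\mathrm{supp}(u)|\in\{0,2^{k-3}\}$ for every $u\in W\setminus\{0\}$ (another consequence of divisibility together with the minimality of $c^*$) to show that the induced rigidity is incompatible with $n<2^k-1$. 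The main obstacle is carrying out this last structural step uniformly in $k\ge 5$; I expect the cleanest route to be geometric, working with the multiset $M$ on $\PG(k-1,2)$ and exploiting the averaging congruences $M(T)\equiv n\pmod{2^t}$ on every $t$-dimensional projective subspace $T$ (a standard consequence of $2^{k-2}$-divisibility) to force $M\bmod 2$ to be affine, then combining this with the strong-blocking condition to eliminate every multiplicity function of total weight $n<2^k-1$.
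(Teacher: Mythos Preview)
Your upper bound and the $k\le 4$ cases are fine, and your subcode-plus-parity argument for $k=4$ is exactly the mechanism the paper uses for \emph{all} $k$. The genuine gap is that for $k\ge 5$ you never actually eliminate codewords of weight $3\cdot 2^{k-2}$; you only sketch a hoped-for geometric route and explicitly flag it as ``the main obstacle''. Without that elimination the MacWilliams system has three unknowns and your odd/even contradiction does not fire.

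The paper closes this gap with a short residual-code argument, not a geometric one. Assume $n\le 2^k-1$ and suppose $c\in C$ has weight $3\cdot 2^{k-2}$. Puncturing on $\supp(c)$ gives the residual $C_c$, which has length $n-3\cdot 2^{k-2}\le 2^{k-2}-1$ and dimension exactly $k-1$ (minimality of $C$ forces the puncturing kernel to be $\{0,c\}$). Since $C$ is $2^{k-2}$-divisible, $C_c$ is $2^{k-3}$-divisible, and its length bound leaves $2^{k-3}$ as the only admissible nonzero weight. But a binary one-weight code is a repeated simplex (Bonisoli), and a constant-weight-$2^{k-3}$ simplex repetition cannot have dimension $k-1$. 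Contradiction; so no weight $3\cdot 2^{k-2}$ occurs.

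With only the weights $2^{k-2}$ and $2^{k-1}$ remaining, your own argument finishes the job verbatim for every $k\ge 2$: the minimum-weight codewords form a subcode, so their number $a_1=2^t-1$ is odd (or zero), while the first two MacWilliams identities give $a_1=2(2^k-1)-2n$, which is even. Hence $a_1=0$, $d\ge 2^{k-1}$, and Griesmer yields $n\ge 2^k-1$. So the missing idea is precisely the residual-code step; once you insert it, your proof and the paper's coincide.
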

\begin{proof}
  Since the $k$-dimensional simplex code is $2^{k-1}$-divisible and minimal, we have $m\!\left(k,2;2^{k-2}\right)\le 2^k-1$, so that we assume $n\le 2^k-1$ for the length of an attaining code 
  $C$. Note that the possible non-zero weights of $C$ are given by $i\cdot 2^{k-2}$ for $1\le i\le 3$. 
  
  If $c\in C$ is a codeword of weight $3\cdot 2^{k-2}$, then the corresponding residual code $C_c$ has length at most $2^{k-2}-1$ and dimension $k-1$ (since $C$ is minimal). Thus, we have $k\ge 3$ 
  and $C_c$ is $2^{k-3}$-divisible with $2^{k-3}$ as the unique non-zero weight. Since one-weight codes are repetitions of simplex codes, see e.g.~\cite{bonisoli1984every}, $C_c$ can 
  have  dimension of at most $k-2$ --- contradiction.
  
  So, let $a_1$ be the number of codewords of weight $2^{k-2}$ and $a_2$ be the number of codewords of weight $2^{k-1}$. From the first two MacWilliams equations we compute $a_1+a_2=2^k-1$ and 
  $2n=a_1+2a_2$, so that $a_1=2^{k+1}-2-2n$, i.e., $a_1$ is even. Since the code is minimal, the sum of any two different codewords of weight $2^{k-2}$ has again weight $2^{k-2}$, i.e.\ the codewords 
  of the smallest weight form subcode and we have $a_1=2^t-1$ for some integer $t$.\footnote{We remark that $\Delta$-divisible linear codes spanned by codewords of weight $\Delta$ have been completely 
  classified in \cite{kiermaier2023classification}. Note that there exists a $2^{k-2}$-divisible linear code of length $2^{k-1}$ and dimension $k$ satisfying $a_1=2^k-2$, $a_2=1$. However, this code, 
  corresponding to an affine subspace, is not minimal.} Thus, we have $t=0$ and $a_1=0$, i.e., we have $d\ge 2^{k-1}$ for the minimum distance and can apply the Griesmer bound for the lower bound 
  $n\ge 2^k-1$.   
\end{proof}

For parameters not covered by these two propositions and dimension $k\ge 3$ we have applied the software \texttt{LinCode} for the enumeration of linear codes 
\cite{bouyukliev2021computer} using the bounds for the minimum and maximum possible weight in Theorem~\ref{thm_bound_summary} and also using the weight restrictions 
implied by the divisibility constant $\Delta$. For field sizes $q=2$ and $q=3$ we summarize our numerical results in Table~\ref{table_div_q_2_3}. With this, $m(k,q;\Delta)$ is
completely determined for $k\le 9$ if $q=2$ and for $k\le 5$ if $q=3$. 

\begin{table}[htp]
  \begin{center}
    \begin{tabular}{lrrrrrrrrrrrrrr}
    \hline
    $k$                & 4 & 4 &  5 &  5 &  5 &  6 &  6 &  6 &  6 &  7 &  7 &  7 &  7 &  7 \\
    $q$                & 2 & 2 &  2 &  2 &  2 &  2 &  2 &  2 &  2 &  2 &  2 &  2 &  2 &  2 \\
    $\Delta$           & 1 & 2 &  1 &  2 &  4 &  1 &  2 &  4 &  8 &  1 &  2 &  4 &  8 & 16 \\
    $m(k,q;\Delta)$    & 9 & 9 & 13 & 14 & 17 & 15 & 15 & 18 & 36 & 20 & 21 & 26 & 42 & 84 \\
    \hline\\[2mm]
    \hline
    $k$                &  8 &  8 &  8 &  8 &  8 &   8 &  9 &  9 &  9 &  9 &  9 &   9 &   9 & 10 \\
    $q$                &  2 &  2 &  2 &  2 &  2 &   2 &  2 &  2 &  2 &  2 &  2 &   2 &   2 &  2 \\
    $\Delta$           &  1 &  2 &  4 &  8 & 16 &  32 &  1 &  2 &  4 &  8 & 16 &  32 &  64 &  4 \\
    $m(k,q;\Delta)$    & 24 & 24 & 29 & 45 & 90 & 174 & 26 & 27 & 30 & 58 & 96 & 192 & 384 & 31 \\
    \hline\\[2mm]
    \hline
    $k$                & 10 & 10 &  10 &  10 & 3 &  3 &  4 &  4 &  4 &  5 &  5 &  5 &   5 \\
    $q$                &  2 &  2 &   2 &   2 & 3 &  3 &  3 &  3 &  3 &  3 &  3 &  3 &   3 \\
    $\Delta$           &  8 & 16 &  32 &  64 & 1 &  3 &  1 &  3 &  9 &  1 &  3 &  9 &  27 \\
    $m(k,q;\Delta)$    & 60 & 93 & 186 & 366 & 9 & 12 & 14 & 15 & 38 & 19 & 19 & 48 & 116 \\
    \hline    
    \end{tabular}
    \caption{Exact values of $m(k,q;\Delta)$ for small parameters where $q\in\{2,3\}$.}
    \label{table_div_q_2_3}
  \end{center}
\end{table}

\begin{lemma}
  \label{lemma_su2}
  For each integer $t\ge 2$ we have $m\!\left(2t,2;2^{t-1}\right)\le 3\cdot \left(2^t-1\right)$.
\end{lemma}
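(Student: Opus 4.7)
The plan is to exhibit an explicit $[3(2^t-1), 2t]_2$-code that is $2^{t-1}$-divisible and minimal, built from three copies of the simplex code. Let $G_t \in \mathbb{F}_2^{t \times (2^t-1)}$ be a generator matrix of the $t$-dimensional binary simplex code $S_t$, and define
$$
  G \;=\; \begin{pmatrix} G_t & G_t & 0 \\ 0 & G_t & G_t \end{pmatrix} \in \mathbb{F}_2^{2t \times 3(2^t-1)}.
$$
Since $G_t$ has rank $t$, the matrix $G$ has rank $2t$, so it generates a code $C$ of length $3(2^t-1)$ and dimension $2t$. Every codeword of $C$ has the form $c_{a,b} = (aG_t \mid (a+b)G_t \mid bG_t)$ for some $(a,b) \in \mathbb{F}_2^t \times \mathbb{F}_2^t$.

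Next I would establish the divisibility and the weight spectrum. Because every nonzero codeword of $S_t$ has weight exactly $2^{t-1}$, the weight of $c_{a,b}$ is $2^{t-1}$ times the number of nonzero entries among $a, b, a+b$. Hence $\operatorname{wt}(c_{a,b}) \in \{0, 2^t, 3 \cdot 2^{t-1}\}$, which is in particular divisible by $2^{t-1}$.

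The heart of the argument is minimality, which is where I expect the only real work. I would use the following pivotal property of the simplex code: since all nonzero codewords have the same weight $2^{t-1}$, for any $x, y \in \mathbb{F}_2^t$ one has $\supp(xG_t) \subseteq \supp(yG_t)$ if and only if $x = 0$ or $x = y$. Suppose $c_{a',b'}$ and $c_{a,b}$ are two codewords with $\supp(c_{a',b'}) \subseteq \supp(c_{a,b})$. Applying the pivotal property in each of the three blocks gives
$$
  a' \in \{0, a\}, \qquad a' + b' \in \{0, a+b\}, \qquad b' \in \{0, b\},
$$
with the convention that the singleton $\{0\}$ replaces $\{0, x\}$ whenever $x = 0$. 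A short case analysis on which of $a, b, a+b$ are nonzero then shows that the only $(a', b')$ consistent with all three constraints are $(0,0)$ and $(a,b)$; i.e.\ $c_{a',b'}$ is either zero or equal to $c_{a,b}$.

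The main obstacle is to carry out this case analysis cleanly in the generic case where $a, b, a+b$ are all nonzero: one must check that the mixed choices $(a',b') = (a,0)$ and $(a',b') = (0,b)$ are ruled out by the middle-block constraint, since $a \notin \{0, a+b\}$ when $b \neq 0$ and similarly $b \notin \{0, a+b\}$ when $a \neq 0$. This yields minimality of $C$ and hence the claimed inequality $m(2t, 2; 2^{t-1}) \le 3(2^t-1)$.
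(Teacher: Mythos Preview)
Your construction is exactly the one in the paper: the columns of your matrix $G$ are precisely the points of the three pairwise disjoint $t$-subspaces $\{(x,0)\}$, $\{(x,x)\}$, $\{(0,x)\}$ of $\mathbb{F}_2^{2t}$, and your weight computation coincides with theirs, giving non-zero weights $2\cdot 2^{t-1}$ and $3\cdot 2^{t-1}$.

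The only genuine difference is the minimality argument. The paper dispatches it in one line via the Ashikhmin--Barg condition: since $w_{\min}/w_{\max}=2/3>1/2$, minimality is automatic. Your route is a direct support-containment case analysis, which is correct and fully self-contained (your handling of the cases where one of $a,b,a+b$ vanishes is fine). The Ashikhmin--Barg shortcut is slicker and scales to other two-weight constructions without rework; your argument has the advantage of not relying on an external criterion and of making the combinatorics explicit. Either is perfectly adequate here.
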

\begin{proof}
  Consider the linear code $C$ corresponding to three pairwise disjoint $t$-dimensional subspaces of $\PG(2t-1,2)$. With this, $C$ is an 
  $\left[3\cdot \left(2^t-1\right),2t\right]_2$-code with non-zero weighs $2\cdot 2^{t-1}$ and $3\cdot 2^{t-1}$, which is minimal due to the 
  Ashikhmin-Barg condition \cite{ashikhmin1998minimal}. 
\end{proof}
We remark that the constructed projective two-weight code contains to the family SU2 in \cite{calderbank1986geometry}. While equality is attained in Lemma~\ref{lemma_su2} 
for $t\in \{2,4,5\}$, we have $m(6,2;4)=18<21$.

\medskip

The interesting codes, i.e.\ those that cannot be obtained by repetitions of smaller codes, are given by
$$
  \begin{pmatrix}
    11111111111010000\\
    00000111111101000\\
    00111000111100100\\
    01011011001100010\\
    11100001011100001
  \end{pmatrix} 
$$
attaining $m(5,2;4)=17$ with weight enumerator $1+25x^{8}+6x^{12}$ and an automorphism group of order $720$, as well as      
$$
  \begin{pmatrix}
    111111111110100000\\
    000001111111010000\\
    001110001111001000\\
    010110110011000100\\
    111000010111000010\\
    011011100101000001
  \end{pmatrix}
$$
attaining $m(6,2;4)=18$ with weight enumerator $1+45x^{8}+18x^{12}$ and an automorphism group of order $2160$, see \cite{bierbrauer1997family}. 
For the first code we remark that the automorphism group is isomorphic to the symmetric group $S_6$ and has point orbits in $\PG(4,2)$ of sizes $1$, $15$ and $15$. The 
unique point has multiplicity $2$ in the attaining construction and the points in one of the other classes have multiplicity $1$. 
The unique code attaining $m(7,2;8)=42$ is given by
$$
\begin{pmatrix}
  111111111111111111111110000000000001000000\\
  000000000001111111111111111111111100100000\\
  000000000110000001111110000011111110010000\\
  000000001010001110001110011100011110001000\\
  111111111100110010010110101101100110000100\\
  000000010011000110110010110110101010000010\\
  000000100010011010111001011010100110000001
\end{pmatrix} 
$$
with weight enumerator $1+45x^{16}+82x^{24}$ and an automorphism group of order $138240$. Considered as a multiset of points in $\PG(6,2)$ the automorphism group forms three point orbits 
of sizes $1$, $36$, and $90$ with point multiplicities $6$, $1$, and $0$, respectively. 
There are $62$ non-isomorphic doubly-even minimal $[29,8]_2$-codes. One example
is given by
$$
  \begin{pmatrix}
    11111111111111100000010000000\\
    00000001111111111100001000000\\
    00011110000111100011100100000\\
    00100110111001100101100010000\\
    01011011001010101110000001000\\
    11001001010011110010100000100\\
    01110010010011111001000000010\\
    00111000100101011101100000001
  \end{pmatrix}
$$ 
with weight enumerator $1+114x^{12}+119x^{16}+22x^{20}$ and an automorphism group of order $3$. 

There are two non-isomorphic $8$-divisible minimal $[45,8]_2$-codes. Both  have weight enumerator $1+45x^{16}+210x^{24}$ and are 
projective two-weight codes, see \cite{calderbank1986geometry} for more details. One example is given by the construction in Lemma~\ref{lemma_su2}. 
The orders of the automorphism groups are $3628800$ and $120960$.  
The unique code attaining $m(8,2;32)=174$ 
 is given by
 $$
   {\tiny
   \left(\begin{smallmatrix}
     1111111111111111111111111111111111111111111111111111111111111111111111111111111111111111111111\\ 10000000000000000000000000000000000000000000000000000000000000000000000010000000\\
     0000000000000000000000000000000000000000000000011111111111111111111111111111111111111111111111\\ 11111111111111111111111111111111111111111111111100000000000000000000000001000000\\
     0000000000000000000000000000000111111111111111100000000000000001111111111111111111111111111111\\ 10000000000000001111111111111111111111111111111111111111111111100000000000100000\\
     0000000000000000000000011111111000000001111111100000000111111110000000000000000000000001111111\\ 10000000111111110000000000000000000000001111111100000001111111111111110000010000\\
     0000000000000000000001100000011000000110000001100000011000000110000001111111111111111110000001\\ 10000011000000110000001111111111111111110000001100000110000001100000111000001000\\
     0000000000000000000110000111111001111110000110000001100001111110011110000000000000000110000110\\ 00001100001111110011110000000000000000110000110001111110000110001111110100000100\\
     0011111111111111111010001011111010001000111011101110111010001000101111111111111111111110001010\\ 01110111010001000101110000000000000000110001010010001000111011110111110100000010\\
     1100000000000000001110111101101110111011011110110111101110111011110111111111111111111010111110\\ 10010001000010010000011111111111111111010010000100010010001000100001011000000001
   \end{smallmatrix}\right)} 
 $$
 with 
weight enumerator $1+69x^{64}+186x^{96}$ and an automorphism group of order $61931520$. One of the five codes attaining $m(9,2;2)=27$ is given by 
$$
  \begin{pmatrix}
    111111111110000000100000000\\
    000001111111111100010000000\\
    001110001110011111001000000\\
    010110010010101101000100000\\
    111000110101100111000010000\\
    110011010001110001000001000\\
    001100111001001110000000100\\
    101010111000011001000000010\\
    011111010011011010000000001
  \end{pmatrix}
$$
with weight enumerator $1+90x^{10}+164x^{12}+84x^{14}+123x^{16}+50x^{18}$ and an automorphism group of order $48$. There are $9$ non-isomorphic codes attaining $m(9,2;4)=30$. All of them have 
weight enumerator $1+190x^{12}+255x^{16}+66x^{20}$. An example with an automorphism group of order $10$ is given by
$$
  \begin{pmatrix}
    111111111111111000000100000000\\
    000000011111111111000010000000\\
    000111100001111000111001000000\\
    001011100110011011001000100000\\
    011101101010001100011000010000\\
    101010110000001101111000001000\\
    010011010000110111101000000100\\
    111010000111001110010000000010\\
    111001001100011101100000000001
  \end{pmatrix}.
$$
There are $3$ non-isomorphic codes attaining $m(9,2;8)=58$. All of them have minimum distance $d=24$. An example with weight enumerator $1+194x^{24}+311x^{32}+6x^{40}$
and an automorphism group of order $384$ is given by
$$
  \begin{pmatrix}
    1111111111111111111111111111111000000000000000000100000000\\
    0000000000000000000111111111111111111111110000000010000000\\
    0000000000000111111000000111111000001111111111100001000000\\
    0000000111111000011111111000011000110000110001110000100000\\
    0001111000011001111001111111111011110000110111101000010000\\
    0110000011100011100000001011100101010111110010011000001000\\
    1110001000101110101000110011101010101111000000010000000100\\
    0011110001100111100010010000000010100111011110110000000010\\
    0001110001110000110000000100111111010111101100001000000001
  \end{pmatrix}.
$$
The unique code attaining $m(9,2;16)=96$ is given by
$$
  \left(\begin{smallmatrix}
    111111111111111111111111111111111111111111111110000000000000000000000000000000000000000100000000\\
    000000000000000000000001111111111111111111111111111111111111111111111100000000000000000010000000\\
    000000000001111111111110000000000001111111111110000000000011111111111111111111111000000001000000\\
    000001111110000001111110000001111110000001111110000011111100000011111100000111111111110000100000\\
    011110011110011110011110000110000110000110000110111100111100111100111100011000011000111000010000\\
    101110101110101110101110001010001010001010001011011101011101011101011100101000101001011000001000\\
    000110110111100110100010010010011110111010011000001101101111001101000111101001100011111000000100\\
    110011111010010000100110101100111110001000110100000110010110100111011101101111011110010000000010\\
    001010000110010010010100011110101111010111010111101101111010110110101110010010010010011000000001
  \end{smallmatrix}\right)
$$    
with weight enumerator $1+18x^{32}+472x^{48}+21x^{64}$ and an automorphism group of order $41472$.
There are two codes attaining $m(10,2;4)=31$. Both have weight enumerator $1+310x^{12}+527x^{16}+186x^{20}$, an automorphism group of order $155$, 
and are distance-optimal. Corresponding generator matrices are given by
$$
  \begin{pmatrix}
    1111111111111110000001000000000\\
    0000000111111111110000100000000\\
    0001111000011110001110010000000\\
    0010111001100110110010001000000\\
    0111011010100011000110000100000\\
    1010101100000011011110000010000\\
    1111010101101011111010000001000\\
    1011100000101111100100000000100\\
    0111101011001000011010000000010\\
    1110110000111000010110000000001
  \end{pmatrix}
  and
  \begin{pmatrix}
    1111111111111110000001000000000\\
    0000000111111111110000100000000\\
    0001111000011110001110010000000\\
    0010111001100110110010001000000\\
    0111011010100011000110000100000\\
    1010101100000011011110000010000\\
    0100110100001101111010000001000\\
    1110100001110011100100000000100\\
    1001010001001011110110000000010\\
    0011100111010100011010000000001
  \end{pmatrix}\!.
$$  
\cite[Chapter 8]{macwilliams1977theory} contains a construction of an infinite family of $\left(2^m-1, 2m\right)$ cyclic codes with
three different nonzero weights is given for odd $m$. As observed in \cite[Example 6]{cohen1985linear}, choosing $m=5$ yields a $4$-divisible 
minimal $[31,10,12]_2$ three-weight code. 
For $m(10,2;2)$ we have verified that length $28$ cannot be attained. 
There are three codes attaining $m(10,2;8)=60$, all with weight enumerator $1+270x^{24}+735x^{32}+18x^{40}$. The example with an automorphism group of order $69120$ 
is given by
$$
  \begin{pmatrix}
  111111111111111111111111111111100000000000000000001000000000\\
  000000000000000111111111111111111111110000000000000100000000\\
  000000011111111000000001111111100000001111111000000010000000\\
  000000000001111000011110000111100011110001111111000001000000\\
  000111111111111000011110011001101100110110011011100000100000\\
  011001100110011001100111100110000000000001111011010000010000\\
  101010100001100000101000001011100011001111101100110000001000\\
  101010101110111110011110100101110001000111111001110000000100\\
  101010111000000010011010111110101111110010111111110000000010\\
  101010100010001111111001101000100100010001100010110000000001
  \end{pmatrix}\!.
$$

The unique code attaining $m(10,2;16)=93$ is given by 
the construction in Lemma~\ref{lemma_su2}. 
It has 
weight enumerator $1+93x^{32}+930x^{48}$ and an automorphism group of order $59996160$.

The unique code attaining $m(10,2;64)=366$ is given by
$$
\tiny
\left(\begin{smallmatrix}
11111111111111111111111111111111111111111111111111111111111111111111111111111111111111111111111111111111111111111111111111111111111111111111\\ 
\quad 11111111111111111111111111111111111111111111111111100000000000000000000000000000000000000000000000000000000000000000000000000000000000\\
\quad 00000000000000000000000000000000000000000000000000000000000000000000000000000000001000000000\\
00000000000000000000000000000000000000000000000000000000000000000000000000000000000000000000000111111111111111111111111111111111111111111111\\
\quad 11111111111111111111111111111111111111111111111111111111111111111111111111111111111111111111111111111111111111111111111111111111111111\\
\quad 11111111111100000000000000000000000000000000000000000000000000000000000000000000000100000000\\
00000000000000000000000000000000000000000000000111111111111111111111111111111111111111111111111000000000000000000000000000000000000000000000\\
\quad 00011111111111111111111111111111111111111111111111100000000000000000000000000000000000000000000000111111111111111111111111111111111111\\
\quad 11111111111111111111111111111111111111111111111111111111111000000000000000000000000010000000\\
00000000000000000000000000000001111111111111111000000000000000000000000000000001111111111111111000000000000000000000000000000001111111111111\\ 
\quad 11100000000000000000000000000000000111111111111111100000000000000000000000000000001111111111111111000000000000000000000000000000001111\\
\quad 11111111111100000000000000000000000000000001111111111111111111111111111111000000000001000000\\
00011111111111111111111111111110000111111111111000000000000111111111111111111110000000000001111000000000000000000001111111111110000111111111\\
\quad 11100000000000000000000000000001111000000000000111100000000000111111111111111111110000000000001111000011111111111111111111111111110000\\
\quad 11111111111100000000000000000001111111111110000111111111111000000000001111111000000000100000\\
00000000000111111111111111111110000000000001111000000001111000011111111111111110000000011110000000000000000000000000000000011110000000000001\\
\quad 11100000000000000000000000011110000000000001111000000000001111000011111111111111110000000011110000000000000000111111111111111111110000\\
\quad 00000000111100000000000000000000000000011110000000000001111000000011110000000111000000010000\\
01100111111111111111111111111110011001111111111000000110000001111111111111111110000001100000011001111111111111111110011111111110011001111111\\
\quad 11100000011111111111111111100000011000000110000001100000110000001100000000000000000000001100000011001100111111000000000000000011110011\\
\quad 00111111111100000000000000000110011111111110011001111111111000001100000011011000100000001000\\
00000000011000000000000000000110000000000110011000011000011000011111111111111110000110000110000111111111111111111111100111100111111110011110\\
\quad 01100111100000000000000001100111111001111110011111101111110011111111111111111111110011111100111111111111001111000000000000000000111111\\
\quad 11001111001101111111111111111000000001100110000000000110011000110000110000000011010000000100\\
00000000101011111111111111111011111110111010101000101000101000011111111111111110111011101011111110000000000000000111101011101010000000100010\\
\quad 10101011100000000000000001101011111010001000101000010111110101111111111111111111110100010001010000111111010111111111111111111101010000\\
\quad 00010001010100000000000000000000000010101011111110111010101111011101011111000101010000000010\\
10101001000001111111111111111000101010001000000001000010000010100000000000000000001000100000101010000000000000000010100000100000101010000100\\
\quad 00000000111111111111111110100000101000010010000010100001010000010111111111111111110000100100000101010101000001000000000000000000000101\\
\quad 01000010000011111111111111111010100100000000101010001000000001000100000101101000100000000001
\end{smallmatrix}\right)
$$
with weight enumerator $1+141x^{128}+882x^{192}$ and an automorphism group of order $27745320960$.

\medskip
  
The unique code attaining $m(3,3;3)=12$ is given by
$$
\begin{pmatrix}
  111111110100\\
  000011221010\\
  011200022001
\end{pmatrix} 
$$
with weight enumerator $1+6x^{6}+20x^{9}$ and an automorphism group of order $48$. For $m(4,3;3)=15$ there are two attaining non-isomorphic codes. They 
are two-weight codes with weight enumerator $1+50x^{9}+30x^{12}$ and belong to the families FE1 and FE4 in \cite{calderbank1986geometry}.
The unique code attaining $m(4,3;9)=38$ is given by
$$
\begin{pmatrix}
  11111111111111111111111111000000001000\\
  00000000111111111222222222111111110100\\
  00000012000012222000011112000122220010\\
  01111200001200111001201110012000000001
\end{pmatrix} 
$$
with weight enumerator $1+12x^{18}+68x^{27}$ and an automorphism group of order $384$. 
The unique code attaining $m(5,3;9)=48$ is given by
$$
\begin{pmatrix}
  111111111111111111111111111111111110000000010000\\
  000000000000000001111111112222222221111111101000\\
  000000000111111120000111120111122220001222200100\\
  000000001011111210111001202012211121222000200010\\
  000000000200122202012222221221201210022012200001
\end{pmatrix} 
$$
with weight enumerator $1+6x^{18}+92x^{27}+144x^{36}$ and an automorphism group of order $96$. 
The unique code attaining $m(5,3;27)=116$ is given by
$$
\left(\begin{smallmatrix}
  11111111111111111111111111111111111111111111111111111111111111111111111111111111000000000000000000000000000000010000\\
  00000000000000000000000000111111111111111111111111111222222222222222222222222222111111111111111111111111110000001000\\
  00000000000000111111222222000000111111222222222222222000000111111111111111222222000001111112222222222222221111100100\\
  00000000012222012222012222011112011112000000000011112000012000000000000012000012122220122220000000000122220001200010\\
  00000000110112220122000012101121201222000000000000120011211000000000012222001200000121101120000000002201221121100001
\end{smallmatrix}\right) 
$$
with weight enumerator $1+30x^{54}+212x^{81}$ and an automorphism group of order $89856$.

\medskip

For $q=4$ also fractional powers of the field size need to be considered. For small parameters we have obtained $m(3,4;1)=12$, $m(3,4;2)=14$, $m(3,4;4)=15$, $m(3,4;8)=21$, $m(4,4;1)=18$, 
$m(4,4;2)=19$, $m(4,4;4)=20$, $m(4,4;8)=40$, $m(4,4;16)=62$, and $m(4,4;32)=85$. 
As the number suggest, we have a similar result as Proposition~\ref{prop_div_dim_one_less} for $q=4$: 
\begin{proposition}
  For $k\ge 2$ we have $m\!\left(k,4;2^{2k-3}\right)=\tfrac{4^k-1}{3}$.
\end{proposition}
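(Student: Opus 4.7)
The plan is to prove the upper bound via the $k$-dimensional $\mathbb{F}_4$-simplex code and to prove the lower bound by forcing the minimum distance of any candidate code to equal $4^{k-1}$, after which the Griesmer bound closes the gap. The simplex code has length $(4^k-1)/3$, is one-weight with weight $4^{k-1}=2\cdot 2^{2k-3}$ (hence $2^{2k-3}$-divisible), and is minimal because every projective point of $\PG(k-1,4)$ appears as a column, so that $m(k,4;2^{2k-3})\le (4^k-1)/3$.

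For the lower bound let $C$ be a minimal $2^{2k-3}$-divisible $[n,k]_4$-code with $n\le (4^k-1)/3$. By Theorem~\ref{thm_bound_summary}(c), $2^{2k-3}$-divisibility, and the easy inequality $3\cdot 2^{2k-3}>n-k+1$ (valid for all $k\ge 2$), the nonzero weights of $C$ lie in $\{2^{2k-3},2^{2k-2}\}$. If the minimum distance equals $2^{2k-2}$, the Griesmer bound gives $n\ge\sum_{i=0}^{k-1}4^{k-1-i}=(4^k-1)/3$. Otherwise $C$ contains a codeword of weight $2^{2k-3}$; a one-weight $\mathbb{F}_4$-code of dimension $k$ is by Bonisoli's theorem a $t$-fold simplex of weight $t\cdot 4^{k-1}$, so no one-weight code has weight $2^{2k-3}=4^{k-1}/2$. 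Hence $C$ must be two-weight with weights exactly $\{2^{2k-3},2^{2k-2}\}$, and the remaining task is to rule this case out when $n<(4^k-1)/3$.

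For the two-weight analysis I would pass to the geometric picture, where $C$ corresponds to a strong-blocking multiset $M$ on $\PG(k-1,4)$ of size $n$ with hyperplane values in $\{m_1,m_2\}=\{n-2^{2k-3},n-2^{2k-2}\}$. Writing $S_2=\sum_P M(P)^2$, the two incidence identities $\sum_H M(H)=n(4^{k-1}-1)/3$ and $\sum_H M(H)^2=4^{k-2}S_2+(4^{k-2}-1)n^2/3$, combined with the two-weight identity $\sum_H(M(H)-m_1)(M(H)-m_2)=0$, determine $S_2$ as an explicit quadratic in $n$. The inequality $S_2\ge n$ then forces $n\le (4^k-1)/3$ with equality precisely when $M$ is a set, and in that case the quadratic pins $M$ down to the simplex; but the simplex is one-weight, contradicting the two-weight assumption. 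So $M$ must be non-projective ($S_2>n$) with some point $P_0$ of multiplicity $M_*\ge 2$. Taking $P_0$ of maximum multiplicity and splitting on whether $M_*>m_2$: in the main subcase every hyperplane through $P_0$ has value $m_1$, a double count of $\sum_{H\ni P_0}M(H)$ forces $M_*=n-2(4^{k-1}-1)/3$, and the projection of $M\setminus\{P_0\}$ from $P_0$ to $\PG(k-2,4)$ is a constant-hyperplane-sum multiset which by a short averaging lemma equals $2\cdot\mathbf{1}$. Transferring this back via $M(H)=\sum_{L\ni P_0}M(L\cap H)$ to hyperplanes $H\not\ni P_0$, and using the restricted line distributions $(2,0,0,0)$ or $(1,1,0,0)$, forces configurations whose ``special'' points fail to span enough hyperplanes, contradicting strong blocking. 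The complementary subcase $M_*\le m_2$ is ruled out by combining the explicit formula for $S_2$ with $S_2\le M_*\cdot n$, which leaves no integer solutions. The main obstacle is this final non-projective two-weight analysis; the projection-from-$P_0$ trick is the key device that reduces it to a constant-hyperplane-sum problem on $\PG(k-2,4)$.
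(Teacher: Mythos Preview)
Your upper bound and the reduction to nonzero weights in $\{2^{2k-3},2^{2k-2}\}$ are fine and match the paper. But you have overlooked the paper's key trick, which makes the two-weight analysis in your last paragraph unnecessary. In a minimal code, the codewords of the smallest occurring weight $\Delta=2^{2k-3}$ together with $0$ form an $\mathbb{F}_4$-subspace: if $c_1,c_2$ are linearly independent of weight $\Delta$ and $\operatorname{wt}(c_1+c_2)=2\Delta$, then $\supp(c_1)$ and $\supp(c_2)$ are disjoint, hence $\supp(c_1)\subsetneq\supp(c_1+c_2)$, contradicting minimality; so $\operatorname{wt}(c_1+c_2)=\Delta$. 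Writing $3a_i$ for the number of codewords of weight $i\cdot 2^{2k-3}$, the first two MacWilliams identities give $a_1+a_2=(4^k-1)/3$ and $a_1+2a_2=2n$, whence $a_1=2(4^k-1)/3-2n$ is even. But the subcode property forces $3a_1=4^t-1$, i.e.\ $a_1=(4^t-1)/3$, which is odd for every $t\ge 1$. Hence $t=0$, $a_1=0$, the minimum distance is $4^{k-1}$, and Griesmer finishes.

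By contrast, your route via the incidence identities for $S_2$, the split on the maximum point multiplicity $M_*$, and the projection from $P_0$ is not only far more elaborate but, as you yourself acknowledge, incomplete: the ``main subcase'' relies on an unproved structural claim about the projected multiset being $2\cdot\mathbf{1}$ and on a vague contradiction with strong blocking, and the ``complementary subcase'' appeals to an unstated non-existence of integer solutions. None of this is needed once you notice that the minimum-weight codewords form a subcode.
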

\begin{proof}
  Since the $k$-dimensional simplex code is $4^{k-1}$-divisible and minimal, we have $m\!\left(k,4;2^{2k-3}\right)\le \tfrac{4^k-1}{3}$.  
  The possible non-zero weights of an attaining  code $C$ are given by $i\cdot 2^{2k-3}$ for $1\le i\le 2$. By $3a_i$ we denote the corresponding number of codewords,
  so that the first two MacWilliams equations yield $a_1+a_2=\tfrac{4^k-1}{3}$ and $2n=a_1+2a_2$. With this, $a_1=2\cdot\tfrac{4^k-1}{3}-2n$ is even. However, the assumption that $C$ is 
  minimal implies that the sum of any two different codewords with weight $\Delta:=2^{2k-3}$ also has weight $\Delta$. Thus, the codewords of weight $\Delta$ form a subcode implying that
  $a_1=\tfrac{4^t-1}{3}$ for some integer $t$.\footnote{We remark that $\Delta$-divisible linear codes spanned by codewords of weight $\Delta$ have been completely 
  classified in \cite{kiermaier2023classification}.} With this we conclude $t=0$ and $a_1=0$, i.e., we have $d\ge 4^{k-1}$ for the minimum distance and can apply the Griesmer bound for the 
  lower bound $n\ge \tfrac{4^k-1}{3}$.
\end{proof}

\section{Minimum lengths of binary minimal codes}
\label{sec_binary}

As introduced before, we denote by $m(k,q)$ the minimum possible length $n$ of a minimal $[n,k]_q$-code. In this section we will consider binary minimal codes only. The values 
$m(1,2)=1$, $m(2,2)=3$, $m(3,2)=6$, $m(4,2)=9$, $m(5,2)=13$, and $m(6,2)=15$ are known since a while, see \cite{sloane1993covering}; c.f.\ also \cite[Table 1]{dela2021maximum} 
and \cite{alfarano2022geometric}. The bounds $19\le m(7,2)\le 21$, $m(8,2)\le 25$, $m(9,2)\le 29$ were reported in \cite{sloane1993covering}.\footnote{The authors of \cite{bishnoi2023blocking} 
have determined $m(7,2)=20$ and $m(8,2)\le 24$ via ILP computations -- personal communication.} For $m(10,2)\le 30$ we refer to \cite[Section II.A]{cohen1994intersecting}. Constructions 
from \cite{bartoli2023small} yield $m(12,2)\le 42$, $m(15,2)\le 54$, $m(16,2)\le 63$, and \cite{sloane1993covering} states $m(11,2)\le 41$, $m(13,2)\le 51$, $m(17,2)\le 63$.  

As rigorously analyzed in \cite{scotti2023lower}, the lower bound $m(k,q)\ge (q+1)(k-1)$ (see Theorem~\ref{thm_bound_summary}.(a)) cannot be attained if $k$ is sufficiently 
large since the minimum distance $d\ge  (k-1)(q-1)+1=k$ (see Theorem~\ref{thm_bound_summary}.(b)) cannot be attained with equality for $n=(q+1)(k-1)$; c.f.\ \cite[Theorem 4]{sloane1993covering}. Indeed, the data at 
\url{www.codetables.de} on possible minimum distances of $[n,k]_2$-codes implies $m(9,2)\ge 26$, $m(10,2)\ge 28$, $m(11,2)\ge 31$, $m(12,2)\ge 34$, $m(13,2)\ge 39$, $m(14,2)\ge 41$, 
$m(15,2)\ge 45$, $m(16,2)\ge 47$, and $m(17,2)\ge 51$. We remark that \cite{scotti2023lower} also contains theoretical proofs for $m(k,2)>3(k-1)$ for $k\in\{5, 7, 8, 9, 11,13\}$. 

\begin{table}[htp]
  \begin{center}
    \begin{tabular}{lrrrrrrrrr}
    \hline
    $k$                & 1 & 2 & 3 & 4 &  5 &  6 &  7 &  8 &  9 \\
    $m(k,2)$           & 1 & 3 & 6 & 9 & 13 & 15 & 20 & 24 & 26 \\
    \hline\\[2mm]
    \hline
    $k$                & 10     & 11     & 12     & 13     & 14     & 15     & 16     & 17     \\
    $m(k,2)$           & 28--29 & 31--35 & 34--38 & 39--43 & 41--48 & 45--52 & 47--56 & 51--62 \\
    \hline
    \end{tabular}
    \caption{Bounds for $m(k,2)=m(k,2;1)$ for $k\le 17$.}
    \label{table_bounds_binary}
  \end{center}
\end{table}

Here we determine $m(7,2)=20$, $m(8,2)=24$, and $m(9,2)=26$, as well as full classifications of all codes attaining $m(k,2)$ for $k\le 7$ and those attaining $m(9,2)$. 
For $10\le m\le 17$ we give constructions improving the upper bounds for $m(k,2)$, see Table~\ref{table_bounds_binary}. 

\medskip

For $k\le 4$ the attaining examples are unique up to equivalence and have 
nice geometric descriptions, i.e., the corresponding strong blocking sets are given by a point, a line, a plane minus a point, and a hyperbolic quadric. Theoretical uniqueness proofs are 
pretty simple for $k\le 3$ and for $k=4$ we refer to \cite{smaldore2023all}. Alternatively we can describe the example for $k=4$ as the union of three disjoint lines.\footnote{A sketch of a direct 
uniqueness proof is given as follows. The standard equations for a projective $[n,4]_2$ code with minimum weight $4$ and maximum weight $n-3$ yield $n\ge 9$ and weight enumerator 
$1+9x^{4}+6x^{6}$ for $n=9$. Thus, the complement is a $2$-divisible projective code of length $6$ and dimension $k$, which has to be the union of two disjoint lines, see e.g.\
\cite[Proposition 17]{korner2023lengths}.}                     
The next value $m(5,2)=13$ is attained by exactly two non-equivalent codes given e.g.\ by generator matrices
$$
\begin{pmatrix}
1111110010000\\
0001111101000\\
1110010100100\\
0010101100010\\
0101010100001
\end{pmatrix}
\quad\text{and}\quad
\begin{pmatrix}
1111111010000\\
0001111101000\\
0110011100100\\
1010101100010\\
0101110000001
\end{pmatrix}\!\!.
$$
The corresponding weight enumerators and orders of the automorphism groups are given by 
$1+8x^{5}+8x^{6}+4x^{7}+7x^{8}+4x^{9}$, $1+6x^{5}+12x^{6}+4x^{7}+3x^{8}+6x^{9}$ and $8$, $48$, respectively. For $m(6,2)=15$ there is again a unique example 
given e.g.\ by the generator matrix
$$
  \begin{pmatrix}
    111111100100000\\
    000111110010000\\
    011001101001000\\
    100011101000100\\
    001110101000010\\
    011010110000001
  \end{pmatrix} \!\!
$$
of a BCH code, see \cite{cohen1985linear}. This code has weight enumerator $1+30x^{6}+15x^{8}+18x^{10}$ and an automorphism group of order $360$.  For a description of 
this code as the concatenation of two codes we refer to \cite{bartoli2023small}.

We remark that all above extremal codes meet the bounds for the minimum weight $w_{\min}\ge  (k-1)(q-1)+1=k$ (see Theorem~\ref{thm_bound_summary}.(b)) and the maximum 
weight $w_{\max}\le n-k+1$ (see Theorem~\ref{thm_bound_summary}.(c)). Using these bounds we have applied the software \texttt{LinCode} for the enumeration of linear codes 
\cite{bouyukliev2021computer} to determine $m(7,2)=20$ and $m(8,2)=24$. For $k=7$ there are $33$ non-equivalent extremal codes (all with $w_{\min}=7$ and $w_{\max}=14$). 
Generator matrices for those with more than eight automorphisms are given by                       
$$
\begin{pmatrix}
11111111100001000000\\
00001111111100100000\\
00110011101110010000\\
01010101110110001000\\
11011000110100000100\\
10001000111010000010\\
11110010010010000001  
\end{pmatrix}\!\!,
\begin{pmatrix}
11111111100001000000\\
00001111111100100000\\
00110011101110010000\\
01010100111110001000\\
10111001110100000100\\
11100101110010000010\\
11000110100110000001
\end{pmatrix}\!\!,
\begin{pmatrix}
11111111100001000000\\
00001111111100100000\\
00110011101110010000\\
01010100111110001000\\
10111001110110000100\\
11101010001100000010\\
11001001011010000001
\end{pmatrix}\!\!,
\begin{pmatrix}
11111111100001000000\\
00001111111100100000\\
00110011101110010000\\
01011101100110001000\\
11111100111010000100\\
10110100100110000010\\
01001101011010000001
\end{pmatrix}\!\!.
$$ 
We remark that there are $88010$ minimal $[22,7,8]_2$-codes. None of them can be extended to a minimal $[23,8,8]_2$-code. 
There are e.g.\ $2778120$ minimal $[22,6,8]_2$-codes. 
Due to the large number of subcodes we have not enumerated all extensions. So far we have enumerated $2459606$ minimal $[23,7,8]_2$ and $31994$ minimal $[24,8,8]_2$ non-isomorphic codes. 
One example is given by the generator matrix
$$
\begin{pmatrix}
111111111111100010000000\\
000000011111111101000000\\
000111100011101100100000\\
011000100100111100010000\\
001001101101110000001000\\
000010111000011100000100\\
110111100001110000000010\\
010001000011110100000001
\end{pmatrix}
$$
with weight enumerator $1+18x^{8}+30x^{9}+30x^{10}+30x^{11}+22x^{12}+42x^{13}+42x^{14}+26x^{15}+15x^{16}$ and an automorphism group of order $6$. (There is also one example with an 
automorphism group of order $18$.) 
We remark that most of the examples satisfy $w_{\min}=8$, $w_{\max}=17$, and all intermediate weights occur. 
Another example, that is $2$-divisible, is given by the generator matrix
$$
\begin{pmatrix}
111111111111100010000000\\
000000011111111101000000\\
000111100011101100100000\\
001011100101110100010000\\
011101100110110000001000\\
001110111101011100000100\\
001001101100001100000010\\
101100011100100000000001
\end{pmatrix}
$$
and has weight enumerator $1+28x^{8}+60x^{10}+72x^{12}+68x^{14}+27x^{16}$. So far, we found $258$ such non-isomorphic examples. 

\medskip

For dimension $k=9$ we have slightly changed our algorithmic approach. Using the fact that adding a parity bit to a
binary code yields a $2$-divisible (also called even) code, we have enumerated all $2$-divisible minimal $[n,9]_2$-codes 
with $n\le 27$. It turns out that there are exactly $5$ such non-isomorphic codes with length $n=27$ and none with a strictly smaller length. 
If $C$ is a minimal $[n,9]_2$-code that is not even, that adding a parity bit yields an even minimal $[n+1,9]_2$-code. Inverting this operation, we have deleted a column of the above five codes in all possible 
ways and obtained $34$ non-isomorphic $[26,9,9]_2$-codes of which exactly $4$ are minimal, i.e., we have $m(9,2)=26$. 
One example is given by
$$
  \begin{pmatrix}
    11111111110000000100000000\\
    00001111111111100010000000\\
    01110001110011111001000000\\
    00110010010101101000100000\\
    11010010101100111000010000\\
    01110110000010110000001000\\
    01101010110110001000000100\\
    10011100101001011000000010\\
    11001101001100010000000001
  \end{pmatrix}
$$
with weight enumerator $1+32x^{9}+62x^{10}+64x^{11}+84x^{12}+64x^{13}+44x^{14}+64x^{15}+43x^{16}+32x^{17}+22x^{18}$ and an automorphism group of order $16$.

\medskip

For dimension $k=10$  we remark that \cite[Section II.A]{cohen1994intersecting} reports an example verifying $m(10,2)\le 30$. The idea was 
to puncture a $4$-divisible (cyclic) minimal $[31,10,12]_2$ code. In Section~\ref{sec_div} we have determined all $4$-divisible minimal $[31,10,12]_2$ codes. There are exactly two such non-isomorphic 
codes and also two non-isomorphic puncturings with generator matrices
$$
  \begin{pmatrix}
  111111111111110000001000000000\\
  000000111111111110000100000000\\
  001111000011110001110010000000\\
  010111001100110110010001000000\\
  111011010100011000110000100000\\
  010101100000011011110000010000\\
  111010101101011111010000001000\\
  011100000101111100100000000100\\
  111101011001000011010000000010\\
  110110000111000010110000000001
  \end{pmatrix}
  \text{ and }
  \begin{pmatrix}
  111111111111110000001000000000\\
  000000111111111110000100000000\\
  001111000011110001110010000000\\
  010111001100110110010001000000\\
  111011010100011000110000100000\\
  010101100000011011110000010000\\
  100110100001101111010000001000\\
  110100001110011100100000000100\\
  001010001001011110110000000010\\
  011100111010100011010000000001  
  \end{pmatrix}.
$$  
The codes both have an automorphism group of order five and weight enumerator $1+120x^{11}+190x^{12}+272x^{15}+255x^{16}+120x^{19}+66x^{20}$.  
   
\medskip

In order to construct small minimal codes in dimensions $11$ and $12$ we consider a geometric construction. If $M$ is a multiset of points and $Q$ is  
a point in $\operatorname{PG}(v-1q)$, where $v\ge 2$, then we can construct a multiset $M_Q$ by projection trough $Q$, that is the multiset image under 
the map $P\mapsto \langle P, Q\rangle/Q$ setting $M_Q(L/Q) = M(L)-M(Q)$ for every line $L\ge P$ in $\operatorname{PG}(v-1,q)$. We directly verify the 
following properties:
\begin{lemma}   
  \label{lemma_projection}
  Let $M$ be a strong blocking multiset $\PG(k-1,q)$, where $k\ge 2$, and let $M_Q$ arise from $M$ by projection through a point 
  $Q$. Then we have $\# M_Q=\# M-M(Q)$, the span of $M_Q$ has dimension $k-1$, and $M_Q$ is a strong blocking multiset.
\end{lemma}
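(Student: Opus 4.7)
The plan is to verify the three assertions in order, treating $M_Q$ as a multiset living in the quotient projective space $\PG(k-1,q)/Q$, whose points are exactly the lines of $\PG(k-1,q)$ through $Q$. The map $P \mapsto \langle P,Q\rangle/Q$ is well-defined on $\PG(k-1,q)\setminus\{Q\}$, so the rule $M_Q(L/Q)=M(L)-M(Q)$ is a sensible assignment of nonnegative multiplicities to the points of the quotient (note $M(L)\ge M(Q)$ always).

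For the cardinality statement $\# M_Q = \# M - M(Q)$, I would use that the set of lines through $Q$ partitions $\PG(k-1,q)\setminus\{Q\}$: each point $P\ne Q$ lies on a unique line $\langle P,Q\rangle$ through $Q$. Summing
\[
  \# M_Q \;=\; \sum_{L\ni Q}\bigl(M(L)-M(Q)\bigr) \;=\; \sum_{L\ni Q}\sum_{P\in L,\,P\ne Q} M(P) \;=\; \sum_{P\ne Q} M(P) \;=\; \#M - M(Q)
\]
then yields the claim.

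For the dimension of the span, I would first observe that a strong blocking multiset is automatically spanning in its ambient space: otherwise $\supp M$ would lie in some hyperplane $H_0$, and then $\langle \supp M\cap H_0\rangle = \langle \supp M\rangle\subsetneq H_0$, contradicting the definition. Hence $\supp M$ spans $\PG(k-1,q)$. The preimage in $\PG(k-1,q)$ of $\langle \supp M_Q\rangle\subseteq \PG(k-1,q)/Q$ is a subspace containing both $Q$ and every point of $\supp M\setminus\{Q\}$, hence containing $\supp M\cup\{Q\}$, hence equal to the whole space; therefore $\langle\supp M_Q\rangle$ equals the full quotient, which has projective dimension $k-2$ (equivalently, vector dimension $k-1$).

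For the strong blocking property of $M_Q$, take any hyperplane $H'$ of $\PG(k-1,q)/Q$. Its preimage under the quotient map is a hyperplane $H$ of $\PG(k-1,q)$ containing $Q$, and a line $L\ni Q$ satisfies $L/Q\in H'$ iff $L\subseteq H$. So the support of $M_Q$ in $H'$ corresponds exactly to the lines through $Q$ in $H$ that carry a point of $\supp M\setminus\{Q\}$, i.e.\ to the image of $\supp M\cap H$ (minus $Q$ if present) under projection from $Q$ within $H$. Strong blocking of $M$ gives $\langle \supp M\cap H\rangle = H$; since $Q\in H$, adjoining $Q$ does not change this span, and projecting through $Q$ turns an $H$-spanning set into an $H'$-spanning set, giving $\langle \supp M_Q\cap H'\rangle = H'$. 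The only subtlety worth handling carefully is the bookkeeping around whether $Q$ itself lies in $\supp M$ (above it is absorbed by the observation that $Q$ may be freely added to or removed from the spanning set inside $H$); everything else is routine linear-algebraic manipulation of the quotient.
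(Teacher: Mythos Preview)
Your argument is correct and supplies exactly the routine verification the paper skips (the paper writes ``We directly verify the following properties'' and gives no proof). One small wording fix in the spanning step: if $\langle\supp M\rangle$ happens to \emph{equal} a hyperplane $H_0$, your strict containment $\langle\supp M\rangle\subsetneq H_0$ fails; instead pick any second hyperplane $H_1\neq H_0$ (which exists since $k\ge 2$) and observe $\supp M\cap H_1\subseteq H_0\cap H_1\subsetneq H_1$, contradicting strong blocking for $H_1$.
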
  
By $M'$ we denote the set of points that have positive multiplicity in $M_Q$, so that also $M'$ is a strong blocking (multi-)set in 
$\operatorname{PG}(k-1,q)/Q \cong \operatorname{PG}(k-2,q)$, i.e., we can reduce points with multiplicity larger than one to multiplicity one. 
So, starting from a minimal $[n,k]_q$-code $C$ we consider the corresponding multiset of points $M$, apply projection through a point $Q$, reduce 
point multiplicities to obtain $M'$, and then consider the corresponding minimal $[\# M',k]_q$-code $C'$.
 
As an example we consider the binary code
$$
\begin{pmatrix}
1111110010000\\
0001111101000\\
1110010100100\\
0010101100010\\
0101010100001
\end{pmatrix} 
$$
attaining $m(5,2)=13$. Choosing $Q$ as the first column of the generator matrix gives the code $C'$ with generator matrix
$$
  \begin{pmatrix}
    001111101\\
    001100110\\
    010101100\\
    101010100
  \end{pmatrix}\!\!,
$$
which is a representation of the unique code attaining $m(4,2)=9$, i.e., the union of three disjoint lines. In our examples the lines through column $1$ that contain at least three points (which is the 
maximum for $q=2$ and projective codes) are given by the triples of column indices $(1,2,13)$, $(1,3,12)$, and $(1,9,11)$. Also choosing the point $Q$ as the second column yields a minimal $[9,4]_2$-code, 
while all other columns yield (minimal) codes of larger lengths. For projective binary codes or point sets $M$ in $\operatorname{PG}(k-1,2)$ the geometric description of the cardinality of 
$M'$ equals $\# M-1$ minus the number of full lines through $Q$. I.e., if $Q$ equals the first or the second column, then there are exactly three full lines through $Q$, which is the 
maximum since $m(4,2)\ge 9$. If $Q$ equals the last column then there is unique full line through $Q$ and there are exactly two full lines through $Q$ in all other cases.   

Applying projection to the second non-isomorphic code attaining $m(5,2)=13$ yields minimal $[10,4]_2$- and a minimal $[12,4]_2$-code. Applying projection to the unique minimal $[9,4]_2$-code 
yields the unique minimal $[6,3]_2$-code in all cases. This continues for dimension three and two, as can be easily seen from the geometric description of the extremal point sets. 
Applying projection to the unique minimal $[15,6]_2$-code yields minimal $[13,5]_2$-codes in all cases (which all have automorphism groups of order $48$, i.e.\ are equivalent to second non-isomorphic 
$[13,5]_2$-code). We remark that in \cite[Table I]{sloane1993covering} the example for a minimal $[13,5]_2$-code was described as {\lq\lq}omit coordinates 1,6 from{\rq\rq} the (unique) minimal 
$[15,6]_2$-code. In the same vein a minimal $[29,9]_2$-code was constructed from a minimal $[31,10]_2$-code. We remark that applying projection to the minimal $[26,9]_2$-code 
$$
\begin{pmatrix}
1 1 1 1 1 1 1 1 1 1 1 0 0 0 0 0 0 1 0 0 0 0 0 0 0 0\\
0 0 0 0 0 1 1 1 1 1 1 1 1 1 1 0 0 0 1 0 0 0 0 0 0 0\\
0 0 1 1 1 0 0 0 1 1 1 0 0 1 1 1 1 0 0 1 0 0 0 0 0 0\\
0 1 0 1 1 0 0 1 0 0 1 0 1 0 1 0 1 0 0 0 1 0 0 0 0 0\\
1 1 1 0 0 0 1 1 1 1 0 1 0 0 0 1 1 0 0 0 0 1 0 0 0 0\\
0 0 1 0 1 1 1 1 0 1 0 0 0 1 1 1 0 0 0 0 0 0 1 0 0 0\\
1 0 0 0 1 1 0 0 1 1 1 0 1 1 0 0 1 0 0 0 0 0 0 1 0 0\\
1 0 1 1 0 1 0 1 1 1 0 1 1 1 1 1 1 0 0 0 0 0 0 0 1 0\\
1 1 0 0 1 0 0 1 1 1 0 1 1 0 1 0 0 0 0 0 0 0 0 0 0 1
\end{pmatrix}
$$
gives minimal $[n,8]_2$-codes for $n\in\{24,25\}$. 
This phenomenon also occurs for field sizes larger than $2$. 
    
\medskip

The inversion of the projection transformation gives rise to an integer linear programming formulation to search for minimal codes of small length. Starting with the first 
minimal $[30,10]_2$ code let us find the following minimal $[35,11]_2$ code with generator matrix
$$
  \begin{pmatrix}
  11011110101100100010110010010101000\\
  01000000000011000110110110000011100\\
  00110000000101000110111101011100100\\
  00001000000010011000011100000010111\\
  00000100000101011110111010101110101\\
  00000010000111010110010101100000001\\
  00000001100011010000111011010010001\\
  00000000010001001110010111001110011\\
  00000000001111000000001111000001111\\
  00000000000000111110000000111111111\\
  00000000000000000001111111111111111
  \end{pmatrix},
$$    
weight enumerator $1+19x^{11}+83x^{12}+142x^{13}+118x^{14}+125x^{15}+194x^{16}+296x^{17}+356x^{18}+237x^{19}+141x^{20}+134x^{21}+102x^{22}+67x^{23}+29x^{24}+4x^{25}$ and a trivial automorphism group. 
Applying the approach again yields the following minimal $[40,12]_2$ code with generator matrix 
$$
  \begin{pmatrix}
  1001110000000101010110100110011101010111\\
  0100011001011100100110000000011000111101\\
  0000011111001000110010100000011000110010\\
  0000001001010111001110010000000100111000\\
  0011001101011000111100000000001000001011\\
  0000100011010111101000001000010000001101\\
  0000010011010011011100000100000100000011\\
  0000010111001100111010000010001100001111\\
  0000001111000111100110000001000000000001\\
  0000000000111100011110000000111100000111\\
  0000000000000011111110000000000011111111\\
  0000000000000000000001111111111111111111
  \end{pmatrix},
$$    
weight enumerator $1+21x^{12}+70x^{13}+120x^{14}+173x^{15}+183x^{16}+261x^{17}+408x^{18}+493x^{19}+560x^{20}+521x^{21}+408x^{22}+319x^{23}+240x^{24}+167x^{25}+88x^{26}+39x^{27}
+19x^{28}+5x^{29}$ and a trivial automorphism group.  
We remark that both ILP computations were aborted before finishing.     

\medskip

For larger dimensions the most successful approaches are based on our notion of generalized circulant matrices and corresponding computer searches. We state the obtained examples in
the remaining part of this section. The generator matrices of a $2$-divisible minimal $[29,10]_2$ and a minimal $[35,11]_2$-code are given by
$$
  \begin{pmatrix}
    11111111100000000001000000000\\
    01110000011111111000100000000\\
    11101100011100000100010000000\\
    10010011100011100100001000000\\
    01110011010010010100000100000\\
    11001010001011001100000010000\\
    11011100110110100010000001000\\
    00000101111011010010000000100\\
    11100010110010101110000000010\\
    00101000001110011110000000001
  \end{pmatrix}
  \text{ and }
  \begin{pmatrix}
  010000000000    1011    0011    0011    0010    0010    0010\\
  001000000000    1101    1001    1001    0001    0001    0001\\
  000100000000    1110    1100    1100    1000    1000    1000\\
  000010000000    0101    1011    1100    0101    0010    0100\\
  000001000000    1010    1101    0110    1010    0001    0010\\
  000000100000    0101    1110    0011    0101    1000    0001\\
  000000010000    1010    0111    1001    1010    0100    1000\\
  000000001000    0101    0011    0010    1100    1110    0111\\
  000000000100    1010    1001    0001    0110    0111    1011\\
  000000000010    0101    1100    1000    0011    1011    1101\\
  000000000001    1010    0110    0100    1001    1101    1110\\
  \end{pmatrix}.
$$
The latter code arises from a generalized circulant matrix of a non-minimal $[35,12]_2$-code by removing the first row. 
A minimal $[39,12]_2$-code can be obtained from the following generalized circulant matrix of type $\left(3^3,3^{13},3\right)$:  
$$
  \begin{pmatrix} 
  100\,\,000\,\,000\,\,000\,\,010\,\,010\,\,010\,\,010\,\,001\,\,110\,\,101\,\,110\,\,010\\
  010\,\,000\,\,000\,\,000\,\,001\,\,001\,\,001\,\,001\,\,100\,\,011\,\,110\,\,011\,\,001\\
  001\,\,000\,\,000\,\,000\,\,100\,\,100\,\,100\,\,100\,\,010\,\,101\,\,011\,\,101\,\,100\\[0.5mm]
  000\,\,100\,\,000\,\,000\,\,010\,\,010\,\,001\,\,001\,\,011\,\,011\,\,100\,\,000\,\,101\\
  000\,\,010\,\,000\,\,000\,\,001\,\,001\,\,100\,\,100\,\,101\,\,101\,\,010\,\,000\,\,110\\
  000\,\,001\,\,000\,\,000\,\,100\,\,100\,\,010\,\,010\,\,110\,\,110\,\,001\,\,000\,\,011\\[0.5mm]
  000\,\,000\,\,100\,\,000\,\,010\,\,101\,\,000\,\,110\,\,101\,\,001\,\,101\,\,100\,\,100\\
  000\,\,000\,\,010\,\,000\,\,001\,\,110\,\,000\,\,011\,\,110\,\,100\,\,110\,\,010\,\,010\\
  000\,\,000\,\,001\,\,000\,\,100\,\,011\,\,000\,\,101\,\,011\,\,010\,\,011\,\,001\,\,001\\[0.5mm]
  000\,\,000\,\,000\,\,100\,\,111\,\,011\,\,111\,\,011\,\,010\,\,010\,\,010\,\,011\,\,010\\
  000\,\,000\,\,000\,\,010\,\,111\,\,101\,\,111\,\,101\,\,001\,\,001\,\,001\,\,101\,\,001\\
  000\,\,000\,\,000\,\,001\,\,111\,\,110\,\,111\,\,110\,\,100\,\,100\,\,100\,\,110\,\,100\\
  \end{pmatrix}\!.
$$ 
A minimal $[43,13]_2$-code can be obtained from the following generalized circulant matrix of type $\left(1^1 6^2,1^1 6^7,6\right)$:
$$
  \begin{pmatrix}
  100000\,\,000000\,\,0\,\,101000\,\,101000\,\,111010\,\,111000\,\,100000\\
  010000\,\,000000\,\,0\,\,010100\,\,010100\,\,011101\,\,011100\,\,010000\\
  001000\,\,000000\,\,0\,\,001010\,\,001010\,\,101110\,\,001110\,\,001000\\
  000100\,\,000000\,\,0\,\,000101\,\,000101\,\,010111\,\,000111\,\,000100\\
  000010\,\,000000\,\,0\,\,100010\,\,100010\,\,101011\,\,100011\,\,000010\\
  000001\,\,000000\,\,0\,\,010001\,\,010001\,\,110101\,\,110001\,\,000001\\[0.5mm]
  000000\,\,100000\,\,0\,\,110101\,\,000001\,\,010110\,\,100000\,\,111100\\
  000000\,\,010000\,\,0\,\,111010\,\,100000\,\,001011\,\,010000\,\,011110\\
  000000\,\,001000\,\,0\,\,011101\,\,010000\,\,100101\,\,001000\,\,001111\\
  000000\,\,000100\,\,0\,\,101110\,\,001000\,\,110010\,\,000100\,\,100111\\
  000000\,\,000010\,\,0\,\,010111\,\,000100\,\,011001\,\,000010\,\,110011\\
  000000\,\,000001\,\,0\,\,101011\,\,000010\,\,101100\,\,000001\,\,111001\\[0.5mm]
  000000\,\,000000\,\,1\,\,000000\,\,111111\,\,111111\,\,000000\,\,111111
\end{pmatrix}\!.
$$
The blocks of width or length $1$ might also be described by generalizing the notion of a bordered circulant matrix, see e.g.\ 
\cite[Chapter 16]{macwilliams1977theory}. A minimal $[56,16]_2$-code can be obtained from the following generalized circulant matrix of type $\left(8^2,8^7,8\right)$:
$$
  \begin{pmatrix} 
  10000000\,\,00000000\,\,01100000\,\,01011000\,\,01001000\,\,01100011\,\,01000111\\
  01000000\,\,00000000\,\,00110000\,\,00101100\,\,00100100\,\,10110001\,\,10100011\\
  00100000\,\,00000000\,\,00011000\,\,00010110\,\,00010010\,\,11011000\,\,11010001\\
  00010000\,\,00000000\,\,00001100\,\,00001011\,\,00001001\,\,01101100\,\,11101000\\
  00001000\,\,00000000\,\,00000110\,\,10000101\,\,10000100\,\,00110110\,\,01110100\\
  00000100\,\,00000000\,\,00000011\,\,11000010\,\,01000010\,\,00011011\,\,00111010\\
  00000010\,\,00000000\,\,10000001\,\,01100001\,\,00100001\,\,10001101\,\,00011101\\
  00000001\,\,00000000\,\,11000000\,\,10110000\,\,10010000\,\,11000110\,\,10001110\\[0.5mm]
  00000000\,\,10000000\,\,01110010\,\,11111101\,\,11110010\,\,01101010\,\,11100101\\
  00000000\,\,01000000\,\,00111001\,\,11111110\,\,01111001\,\,00110101\,\,11110010\\
  00000000\,\,00100000\,\,10011100\,\,01111111\,\,10111100\,\,10011010\,\,01111001\\
  00000000\,\,00010000\,\,01001110\,\,10111111\,\,01011110\,\,01001101\,\,10111100\\
  00000000\,\,00001000\,\,00100111\,\,11011111\,\,00101111\,\,10100110\,\,01011110\\
  00000000\,\,00000100\,\,10010011\,\,11101111\,\,10010111\,\,01010011\,\,00101111\\
  00000000\,\,00000010\,\,11001001\,\,11110111\,\,11001011\,\,10101001\,\,10010111\\
  00000000\,\,00000001\,\,11100100\,\,11111011\,\,11100101\,\,11010100\,\,11001011
\end{pmatrix}\!.
$$
For minimal $[38,12]_2$-, 
$[48,14]_2$-, $[52,15]_2$-, 
and $[62,17]_2$-codes we obtained the generator matrices
$$
  \!\!\!\!\!\!\!\!\!\!\!\!\!\!\!\!\!\!\!\!\!\!\!
  \left(\begin{smallmatrix}
  100000000000\,\,111110\,\,111110\,\,110000\,\,100000\,\,10\\
  010000000000\,\,011111\,\,011111\,\,011000\,\,010000\,\,01\\
  001000000000\,\,101111\,\,101111\,\,001100\,\,001000\,\,10\\
  000100000000\,\,110111\,\,110111\,\,000110\,\,000100\,\,01\\
  000010000000\,\,111011\,\,111011\,\,000011\,\,000010\,\,10\\
  000001000000\,\,111101\,\,111101\,\,100001\,\,000001\,\,01\\[0.5mm]
  000000100000\,\,100001\,\,111110\,\,010101\,\,110100\,\,01\\
  000000010000\,\,110000\,\,011111\,\,101010\,\,011010\,\,10\\
  000000001000\,\,011000\,\,101111\,\,010101\,\,001101\,\,01\\
  000000000100\,\,001100\,\,110111\,\,101010\,\,100110\,\,10\\
  000000000010\,\,000110\,\,111011\,\,010101\,\,010011\,\,01\\
  000000000001\,\,000011\,\,111101\,\,101010\,\,101001\,\,10  
  \end{smallmatrix}\right)\!\!,
  \left(\begin{smallmatrix}
  10000000000000\,\,1101100\,\,1011111\,\,0110000\,\,1101000\,\,010000\\
  01000000000000\,\,0110110\,\,1101111\,\,0011000\,\,0110100\,\,101110\\
  00100000000000\,\,0011011\,\,1110111\,\,0001100\,\,0011010\,\,101000\\
  00010000000000\,\,1001101\,\,1111011\,\,0000110\,\,0001101\,\,111100\\
  00001000000000\,\,1100110\,\,1111101\,\,0000011\,\,1000110\,\,111111\\
  00000100000000\,\,0110011\,\,1111110\,\,1000001\,\,0100011\,\,000010\\
  00000010000000\,\,1011001\,\,0111111\,\,1100000\,\,1010001\,\,111011\\[0.5mm]
  00000001000000\,\,1111101\,\,1110000\,\,1101101\,\,1111010\,\,100110\\
  00000000100000\,\,1111110\,\,0111000\,\,1110110\,\,0111101\,\,101001\\
  00000000010000\,\,0111111\,\,0011100\,\,0111011\,\,1011110\,\,011110\\
  00000000001000\,\,1011111\,\,0001110\,\,1011101\,\,0101111\,\,011011\\
  00000000000100\,\,1101111\,\,0000111\,\,1101110\,\,1010111\,\,111110\\
  00000000000010\,\,1110111\,\,1000011\,\,0110111\,\,1101011\,\,011111\\
  00000000000001\,\,1111011\,\,1100001\,\,1011011\,\,1110101\,\,111000
  \end{smallmatrix}\right)\!\!,
$$
$$
  \left(\begin{smallmatrix}
  100000000000000\,\,1110110\,\,1011000\,\,1110000\,\,1010000\,\,1100000\,\,11\\
  010000000000000\,\,0111011\,\,0101100\,\,0111000\,\,0101000\,\,0110000\,\,11\\
  001000000000000\,\,1011101\,\,0010110\,\,0011100\,\,0010100\,\,0011000\,\,11\\
  000100000000000\,\,1101110\,\,0001011\,\,0001110\,\,0001010\,\,0001100\,\,11\\
  000010000000000\,\,0110111\,\,1000101\,\,0000111\,\,0000101\,\,0000110\,\,11\\
  000001000000000\,\,1011011\,\,1100010\,\,1000011\,\,1000010\,\,0000011\,\,11\\
  000000100000000\,\,1101101\,\,0110001\,\,1100001\,\,0100001\,\,1000001\,\,11\\[0.5mm]
  000000010000000\,\,1100000\,\,1010010\,\,1001010\,\,1100110\,\,1100101\,\,01\\
  000000001000000\,\,0110000\,\,0101001\,\,0100101\,\,0110011\,\,1110010\,\,01\\
  000000000100000\,\,0011000\,\,1010100\,\,1010010\,\,1011001\,\,0111001\,\,01\\
  000000000010000\,\,0001100\,\,0101010\,\,0101001\,\,1101100\,\,1011100\,\,01\\
  000000000001000\,\,0000110\,\,0010101\,\,1010100\,\,0110110\,\,0101110\,\,01\\
  000000000000100\,\,0000011\,\,1001010\,\,0101010\,\,0011011\,\,0010111\,\,01\\
  000000000000010\,\,1000001\,\,0100101\,\,0010101\,\,1001101\,\,1001011\,\,01\\[0.5mm]
  000000000000001\,\,1111111\,\,1111111\,\,0000000\,\,1111111\,\,0000000\,\,00\\ 
  \end{smallmatrix}\right)\!,\text{ and}
$$
$$
  \left(\begin{smallmatrix}
  10000000000000000\,\,01011001110011000\,\,01010110100110100\,\,01001000100\\
  01000000000000000\,\,00101100111001100\,\,00101011010011010\,\,11100000010\\
  00100000000000000\,\,00010110011100110\,\,00010101101001101\,\,00001000010\\
  00010000000000000\,\,00001011001110011\,\,10001010110100110\,\,00000000010\\
  00001000000000000\,\,10000101100111001\,\,01000101011010011\,\,01111110110\\
  00000100000000000\,\,11000010110011100\,\,10100010101101001\,\,01111110011\\
  00000010000000000\,\,01100001011001110\,\,11010001010110100\,\,01110011100\\
  00000001000000000\,\,00110000101100111\,\,01101000101011010\,\,01001111100\\
  00000000100000000\,\,10011000010110011\,\,00110100010101101\,\,01010010110\\
  00000000010000000\,\,11001100001011001\,\,10011010001010110\,\,11100100110\\
  00000000001000000\,\,11100110000101100\,\,01001101000101011\,\,10001001011\\
  00000000000100000\,\,01110011000010110\,\,10100110100010101\,\,01000011010\\
  00000000000010000\,\,00111001100001011\,\,11010011010001010\,\,11000110010\\
  00000000000001000\,\,10011100110000101\,\,01101001101000101\,\,11010111101\\
  00000000000000100\,\,11001110011000010\,\,10110100110100010\,\,01110001110\\
  00000000000000010\,\,01100111001100001\,\,01011010011010001\,\,11111011000\\
  00000000000000001\,\,10110011100110000\,\,10101101001101000\,\,10000100111
  \end{smallmatrix}\right)\!,
$$
respectively. Here we did not decompose the preceding unit matrix into blocks and only the submatrix 
without the last block of columns is obtained as a generalized circulant matrix. The columns from the 
last blocks are carefully chosen step by step in order to turn the linear code into a minimal one. To this 
end we present some measure of distance to a minimal linear code in the subsequent subsection.

Further examples of minimal $[35,11]_2$- and $[48,14]_2$-codes are given by
$$
  \left(\begin{smallmatrix}
  10000000000\,\,111\,\,111\,\,111\,\,111\,\,111\,\,000\,\,000\,\,011\\[0.5mm]
  01000000000\,\,110\,\,100\,\,100\,\,100\,\,100\,\,100\,\,100\,\,101\\
  00100000000\,\,011\,\,010\,\,010\,\,010\,\,010\,\,010\,\,010\,\,101\\
  00010000000\,\,101\,\,001\,\,001\,\,001\,\,001\,\,001\,\,001\,\,101\\[0.5mm]
  00001000000\,\,010\,\,111\,\,011\,\,101\,\,010\,\,110\,\,010\,\,100\\
  00000100000\,\,001\,\,111\,\,101\,\,110\,\,001\,\,011\,\,001\,\,100\\
  00000010000\,\,100\,\,111\,\,110\,\,011\,\,100\,\,101\,\,100\,\,100\\[0.5mm]
  00000001000\,\,001\,\,110\,\,101\,\,111\,\,010\,\,000\,\,110\,\,110\\
  00000000100\,\,100\,\,011\,\,110\,\,111\,\,001\,\,000\,\,011\,\,110\\
  00000000010\,\,010\,\,101\,\,011\,\,111\,\,100\,\,000\,\,101\,\,110\\[0.5mm]
  00000000001\,\,000\,\,000\,\,111\,\,111\,\,111\,\,111\,\,000\,\,110
  \end{smallmatrix}\right)\!\text{ and }
  \left(\begin{smallmatrix}
  10000000000000\,\,111111\,\,111111\,\,111111\,\,111111\,\,000000\,\,0011\\[0.5mm]
  01000000000000\,\,111110\,\,111010\,\,110100\,\,111000\,\,101000\,\,0101\\
  00100000000000\,\,011111\,\,011101\,\,011010\,\,011100\,\,010100\,\,0101\\
  00010000000000\,\,101111\,\,101110\,\,001101\,\,001110\,\,001010\,\,0101\\
  00001000000000\,\,110111\,\,010111\,\,100110\,\,000111\,\,000101\,\,0101\\
  00000100000000\,\,111011\,\,101011\,\,010011\,\,100011\,\,100010\,\,0101\\
  00000010000000\,\,111101\,\,110101\,\,101001\,\,110001\,\,010001\,\,0101\\[0.5mm]
  00000001000000\,\,000111\,\,100110\,\,000010\,\,101000\,\,110111\,\,1010\\
  00000000100000\,\,100011\,\,010011\,\,000001\,\,010100\,\,111011\,\,1010\\
  00000000010000\,\,110001\,\,101001\,\,100000\,\,001010\,\,111101\,\,1010\\
  00000000001000\,\,111000\,\,110100\,\,010000\,\,000101\,\,111110\,\,1010\\
  00000000000100\,\,011100\,\,011010\,\,001000\,\,100010\,\,011111\,\,1010\\
  00000000000010\,\,001110\,\,001101\,\,000100\,\,010001\,\,101111\,\,1010\\[0.5mm]
  00000000000001\,\,111111\,\,111111\,\,000000\,\,000000\,\,111111\,\,0111
  \end{smallmatrix}\right)\!\!.
$$

\subsection{Acute sets}
\label{subsec_acute} 
Around 1950 Paul Erd\H{o}s conjectured that given more than $2^d$ points in $\mathbb{R}^d$ there are three of
them determining an obtuse angle, i.e.\ an angle strictly greater than $\pi/2$. This conjecture is indeed true, 
see \cite{danzer1962zwei},\cite[Chapter 17]{proofsfromthebook}, and an example is given by a $d$-dimensional 
hypercube which contains many angles of degree $\pi/2$. A set of points in $\mathbb{R}^d$ is acute, if any three points 
from this set form an acute angle, i.e.\ strictly less than $\pi/2$. Such so-called \emph{acute sets} can have 
exponential size \cite{gerencser2019acute} and the maximum possible sizes of acute sets in $\{0,1\}^d$ up to dimension 
$d=10$ are stated in A089676 of the {\lq\lq}The On-Line Encyclopedia of Integer Sequences{\rq\rq} (OEIS). We say that 
a set $S\subseteq \{0,1\}^d$ is linear if it is linearly closed when interpreted over $\F_2$.
\begin{lemma}(Cf.~\cite{randriambololona2017metric})
  Let $C$ be an $[n,k]_2$-code. The codewords of $C$ form an acute set iff $C$ is minimal. 
\end{lemma}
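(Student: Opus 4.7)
The plan is to translate the geometric condition ``acute angle'' into a combinatorial condition on supports of codewords. For three distinct codewords $c_1,c_2,c_3\in C\subseteq\{0,1\}^n$, the angle of the triangle at $c_1$ is determined by the sign of $(c_2-c_1)\cdot(c_3-c_1)$ computed in $\mathbb{R}^n$. The first step I would carry out is the observation that, because the vertices lie in $\{0,1\}^n$, no sign cancellation can occur in this dot product: a coordinate $i$ with $(c_2-c_1)_i=+1$ forces $(c_1)_i=0$, whereas $(c_3-c_1)_i=-1$ forces $(c_1)_i=1$, which is impossible. A coordinate-by-coordinate check then yields
\[
  (c_2-c_1)\cdot(c_3-c_1)\;=\;\bigl|\supp(c_1+c_2)\cap\supp(c_1+c_3)\bigr|,
\]
where the sums on the right are taken in $\F_2^n$. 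In particular the angle at $c_1$ is never obtuse, and it is acute iff $\supp(c_1+c_2)\cap\supp(c_1+c_3)\neq\emptyset$.

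With this reformulation, both implications become short. For the forward direction, assume $C$ is minimal and pick three distinct codewords $c_1,c_2,c_3$. Setting $a:=c_1+c_2$ and $b:=c_1+c_3$, the codewords $a$, $b$, $a+b=c_2+c_3$ are non-zero and pairwise distinct. If $\supp(a)\cap\supp(b)=\emptyset$, then $\supp(a+b)=\supp(a)\sqcup\supp(b)\supsetneq\supp(a)$, contradicting the minimality of $a$ in $C$. Hence the angle at $c_1$ is acute, and by applying the same argument to the other two pairs among $a,b,a+b$ the angles at $c_2$ and $c_3$ are acute as well.

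For the converse, assuming $C$ is not minimal, I would pick non-zero codewords $u,v\in C$ with $\supp(u)\subsetneq\supp(v)$ and consider the three distinct codewords $0,u,v$. Then $\supp(u+v)=\supp(v)\setminus\supp(u)$ is disjoint from $\supp(u)$, so by the reformulation the angle of the triangle $\{0,u,v\}$ at the vertex $u$ equals $\pi/2$, and the codewords of $C$ do not form an acute set.

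The main (and essentially only) obstacle is noticing the absence of sign cancellation in $(c_2-c_1)\cdot(c_3-c_1)$, which is what forces the dot product to coincide exactly with $|\supp(c_1+c_2)\cap\supp(c_1+c_3)|$; once that identity is in hand, both implications follow directly from the definition of a minimal code.
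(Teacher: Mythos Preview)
Your argument is correct and matches the paper's approach: both reduce the angle condition to the observation that $(c_2-c_1)\cdot(c_3-c_1)$ is a sum of non-negative terms equal to $|\supp(c_1+c_2)\cap\supp(c_1+c_3)|$, and then link support-disjointness to non-minimality; the paper merely shortens the dot-product computation by first translating one vertex to $0$ via linearity of $C$. One small wording slip: from $\supp(a)\subsetneq\supp(a+b)$ it is $a+b$, not $a$, whose minimality is violated---but either way $C$ fails to be a minimal code, so the conclusion stands.
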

\begin{proof}
  We associate $C$ with the set $S\subseteq\{0,1\}^k\subset\mathbb{R}^k$ of codewords of $C$ and only use $C$ in the following. 
  Since the points are are subset of the $k$-dimensional unit cube the angle between any triple of points $z,b,c$ is at most $\pi/2$. 
  W.l.o.g.\ we assume that $z$ is the zero vector. So, the angle between $b$, $z$, $c$ at $z=0$ is $\pi/2$ iff the scalar product 
  vanishes, i.e.\ $\sum_{i=1}^n b_ic_i=0$.

  Now consider $a,b\in\mathbb{F}_2^n$ with $\supp(b)\subset \supp(a)$ and set $c=a+b$, so that $\supp(b)\cap\supp(c)=\emptyset$ 
  and $\sum_{i=1}^n b_ic_i=0$. So, if $C$ is acute it is also minimal. For the other direction we observe that 
  $\sum_{i=1}^n b_ic_i=0$ implies $\supp(b)\cap\supp(c)=\emptyset$.  
\end{proof}
Up to dimension $d=4$ the maximum size of an acute set in $\{0,1\}^d$ is indeed attained by a binary linear code. Up to isomorphism there are 
exactly five acute sets in $\{0,1\}^9$ with maximum cardinality $16$ -- only one of them is linear. 
In $\{0,1\}^{10}$ the number of non-isomorphic acute sets of maximum possible cardinality $17$ is $655$, clearly none of them linear. 
For dimension $11$ we performed a partial search finding $17$ non-isomorphic acute sets
of size $23$ and two of size $24$.   
Additionally we have checked that all acute sets in $\{0,1\}^9$ with cardinality at least $10$ have 
extensions to 11-dimensional acute sets with cardinality at most $20$ and all acute sets in $\{0,1\}^{10}$ 
with cardinality $17$ have extensions to 11-dimensional acute sets with cardinality at most $19$. There are 
more than 60\,000  acute sets with cardinality $16$ in $\{0,1\}^{10}$. Using an 
integer linear programming formulation we have checked that the cardinality of all 11-dimensional extensions 
of acute sets in $\{0,1\}^{9}$ with cardinality $8$ or $9$ is upper bounded by $28$.  
Thus, the maximum cardinality of an acute set in $\{0,1\}^{11}$ is upper bounded by $28$.

If we build up a linear code column by column or by adding full column blocks of circulant matrices, 
then our intermediate codes are not minimal and we need some kind of measurement for the \text{distance} 
to a minimal code in our heuristic searches. To this end we use the number of angles with value $\pi/2$.

For additional relations of minimal codes to other structures we refer e.g.\ to \cite{scotti2024recent}.
    
\section*{Acknowledgments}
The first author would like to thank amateur programmer Olga Briginets for her help in writing computer programs. 
The second author thanks Gianira Alfarano, Anurag Bishnoi, Jozefien D'haeseleer, Dion Gijswijt, Alessandro Neri, 
Sven Polak, and Martin Scotti for many helpful remarks on an earlier version of this paper, 
which originally started to investigate so-called trifferent codes, see \cite{trifferent_kurz}.


\begin{thebibliography}{G{\"O}{\"O}{\etalchar{+}}17}

\bibitem[AB98]{ashikhmin1998minimal}
Alexei Ashikhmin and Alexander Barg.
\newblock Minimal vectors in linear codes.
\newblock {\em IEEE Transactions on Information Theory}, 44(5):2010--2017,
  1998.

\bibitem[ABN22]{alfarano2022geometric}
Gianira~N. Alfarano, Martino Borello, and Alessandro Neri.
\newblock A geometric characterization of minimal codes and their asymptotic
  performance.
\newblock {\em Advances in Mathematics of Communications}, 16(1):115--133,
  2022.

\bibitem[ABN24]{alfarano2023outer}
Gianira~N. Alfarano, Martino Borello, and Alessandro Neri.
\newblock Outer strong blocking sets.
\newblock {\em The Electronic Journal of Combinatorics}, 31:1--28, 2024.

\bibitem[ABNR22]{three_combinatorial_perspectives}
Gianira~N. Alfarano, Martino Borello, Alessandro Neri, and Alberto Ravagnani.
\newblock Three combinatorial perspectives on minimal codes.
\newblock {\em SIAM Journal on Discrete Mathematics}, 36(1):461--489, 2022.

\bibitem[All84]{alltop2003method}
William~O. Alltop.
\newblock A method for extending binary linear codes (corresp.).
\newblock {\em IEEE Transactions on Information Theory}, 30(6):871--872, 1984.

\bibitem[AZ18]{proofsfromthebook}
Martin Aigner and G\"unter~M. Ziegler.
\newblock {\em Proofs from {T}{H}{E} {B}{O}{O}{K}}.
\newblock Springer, 6 edition, 2018.

\bibitem[BB23]{bartoli2023small}
Daniele Bartoli and Martino Borello.
\newblock Small strong blocking sets by concatenation.
\newblock {\em SIAM Journal on Discrete Mathematics}, 37(1):65--82, 2023.

\bibitem[BBK21]{bouyukliev2021computer}
Iliya Bouyukliev, Stefka Bouyuklieva, and Sascha Kurz.
\newblock Computer classification of linear codes.
\newblock {\em IEEE Transactions on Information Theory}, 67(12):7807--7814,
  2021.

\bibitem[BDGP24]{bishnoi2023blocking}
Anurag Bishnoi, Jozefien D'haeseleer, Dion Gijswijt, and Aditya Potukuchi.
\newblock Blocking sets, minimal codes and trifferent codes.
\newblock {\em Journal of the London Mathematical Society}, 109(6):e12938,
  2024.

\bibitem[BE97]{bierbrauer1997family}
J{\"u}rgen Bierbrauer and Yves Edel.
\newblock A family of 2-weight codes related to {B}{C}{H}-codes.
\newblock {\em Journal of Combinatorial Designs}, 5(5):391--396, 1997.

\bibitem[BKW05]{braun2005optimal}
Michael Braun, Axel Kohnert, and Alfred Wassermann.
\newblock Optimal linear codes from matrix groups.
\newblock {\em IEEE Transactions on Information Theory}, 51(12):4247--4251,
  2005.

\bibitem[Bon84]{bonisoli1984every}
Arrigo Bonisoli.
\newblock Every equidistant linear code is a sequence of dual {H}amming codes.
\newblock {\em Ars Combinatoria}, 18:181--186, 1984.

\bibitem[Bou24]{bouyuklieva2024structure}
Stefka Bouyuklieva.
\newblock On the structure of the linear codes with a given automorphism.
\newblock {\em Advances in Mathematics of Communications}, 18(2):535--548,
  2024.

\bibitem[BV93]{brouwer1993updated}
Andries~E. Brouwer and Tom Verhoeff.
\newblock An updated table of minimum-distance bounds for binary linear codes.
\newblock {\em IEEE Transactions on Information Theory}, 39(2):662--677, 1993.

\bibitem[CK86]{calderbank1986geometry}
Robert Calderbank and William~M. Kantor.
\newblock The geometry of two-weight codes.
\newblock {\em Bulletin of the London Mathematical Society}, 18(2):97--122,
  1986.

\bibitem[CL85]{cohen1985linear}
G{\'e}rard~D. Cohen and Abraham Lempel.
\newblock Linear intersecting codes.
\newblock {\em Discrete Mathematics}, 56(1):35--43, 1985.

\bibitem[CZ94]{cohen1994intersecting}
G{\'e}rard~D. Cohen and Gilles Z{\'e}mor.
\newblock Intersecting codes and independent families.
\newblock {\em IEEE Transactions on Information Theory}, 40(6):1872--1881,
  1994.

\bibitem[dCK21]{dela2021maximum}
Romar dela Cruz and Sascha Kurz.
\newblock On the maximum number of minimal codewords.
\newblock {\em Discrete Mathematics}, 344(9):112510, 2021.

\bibitem[DG62]{danzer1962zwei}
Ludwig Danzer and Branko Gr{\"u}nbaum.
\newblock {\"U}ber zwei {P}robleme bez{\"u}glich konvexer {K}{\"o}rper von {P.}
  {E}rd{\"o}s und von {V}.{L}. {K}lee.
\newblock {\em Mathematische Zeitschrift}, 79:95--99, 1962.

\bibitem[DS98]{dodunekov1998codes}
Stefan Dodunekov and Juriaan Simonis.
\newblock Codes and projective multisets.
\newblock {\em The Electronic Journal of Combinatorics}, 5:1--23, 1998.

\bibitem[EY09]{esmaeili2009generalized}
Morteza Esmaeili and Somaye Yari.
\newblock Generalized quasi-cyclic codes: structural properties and code
  construction.
\newblock {\em Applicable Algebra in Engineering, Communication and Computing},
  20:159--173, 2009.

\bibitem[GH97]{gulliver1997classification}
T.~Aaron Gulliver and Masaaki Harada.
\newblock Classification of extremal double circulant formally self-dual even
  codes.
\newblock {\em Designs, Codes and Cryptography}, 11:25--35, 1997.

\bibitem[GH19]{gerencser2019acute}
Bal{\'a}zs Gerencs{\'e}r and Viktor Harangi.
\newblock Acute sets of exponentially optimal size.
\newblock {\em Discrete \& Computational Geometry}, 62:775--780, 2019.

\bibitem[G{\"O}{\"O}{\etalchar{+}}17]{guneri2017structure}
Cem G{\"u}neri, Ferruh {\"O}zbudak, Buket {\"O}zkaya, Elif Sa{\c{c}}{\i}kara,
  Zahra Sepasdar, and Patrick Sole.
\newblock Structure and performance of generalized quasi-cyclic codes.
\newblock {\em Finite Fields and Their Applications}, 47:183--202, 2017.

\bibitem[Gra07]{Grassl:codetables}
Markus Grassl.
\newblock {Bounds on the minimum distance of linear codes and quantum codes}.
\newblock Online available at \url{http://www.codetables.de}, 2007.
\newblock Accessed on 2025-05-16.

\bibitem[HN21]{heger2021short}
Tam{\'a}s H{\'e}ger and Zolt{\'a}n~L{\'o}r{\'a}nt Nagy.
\newblock Short minimal codes and covering codes via strong blocking sets in
  projective spaces.
\newblock {\em IEEE Transactions on Information Theory}, 68(2):881--890, 2021.

\bibitem[HVTV98]{heijnen1998some}
Petra Heijnen, Henk Van~Tilborg, and Tom~and Verhoeff.
\newblock Some new binary, quasi-cyclic codes.
\newblock {\em IEEE Transactions on Information Theory}, 44(5):1994--1996,
  1998.

\bibitem[KK20]{kiermaier2020lengths}
Michael Kiermaier and Sascha Kurz.
\newblock On the lengths of divisible codes.
\newblock {\em IEEE Transactions on Information Theory}, 66(7):4051--4060,
  2020.

\bibitem[KK23]{kiermaier2023classification}
Michael Kiermaier and Sascha Kurz.
\newblock Classification of $\Delta$-divisible linear codes spanned by
  codewords of weight $\Delta$.
\newblock {\em IEEE Transactions on Information Theory}, 69(6):3544--3551,
  2023.

\bibitem[KK24]{korner2023lengths}
Theresa K{\"o}rner and Sascha Kurz.
\newblock Lengths of divisible codes with restricted column multiplicities.
\newblock {\em Advances in Mathematics of Communications}, 18(2):505--534,
  2024.

\bibitem[KS12]{kra2012circulant}
Irwin Kra and Santiago~R. Simanca.
\newblock On circulant matrices.
\newblock {\em Notices of the {A}{M}{S}}, 59(3):368--377, 2012.

\bibitem[Kur21]{kurz2021divisible}
Sascha Kurz.
\newblock Divisible codes.
\newblock {\em arXiv preprint 2112.11763}, page 101 pp., 2021.

\bibitem[Kur24]{trifferent_kurz}
Sascha Kurz.
\newblock Trifferent codes with small lengths.
\newblock {\em Examples and Counterexamples}, 5:100139, 2024.

\bibitem[LS01]{ling2001algebraic}
San Ling and Patrick Sol{\'e}.
\newblock On the algebraic structure of quasi-cyclic codes. {I}. {F}inite
  fields.
\newblock {\em IEEE Transactions on Information Theory}, 47(7):2751--2760,
  2001.

\bibitem[MAIB22]{muchtadi2022generalized}
Intan Muchtadi-Alamsyah, Irwansyah, and Aleams Barra.
\newblock Generalized quasi-cyclic codes with arbitrary block lengths.
\newblock {\em Bulletin of the Malaysian Mathematical Sciences Society},
  45(3):1383--1407, 2022.

\bibitem[MS77]{macwilliams1977theory}
Florence~J. MacWilliams and Neil J.~A. Sloane.
\newblock {\em The theory of error-correcting codes}, volume~16.
\newblock Elsevier, 1977.

\bibitem[MS19]{mesnager2019several}
Sihem Mesnager and Ahmet S{\i}nak.
\newblock Several classes of minimal linear codes with few weights from weakly
  regular plateaued functions.
\newblock {\em IEEE Transactions on Information Theory}, 66(4):2296--2310,
  2019.

\bibitem[Ran17]{randriambololona2017metric}
Hugues Randriambololona.
\newblock On metric convexity, the discrete Hahn-Banach theorem, separating
  systems and sets of points forming only acute angles.
\newblock {\em International Journal of Information and Coding Theory},
  4(2-3):159--169, 2017.

\bibitem[Sco24a]{scotti2023lower}
Martin Scotti.
\newblock On the lower bound for the length of minimal codes.
\newblock {\em Discrete Mathematics}, 347(1):113676, 2024.

\bibitem[Sco24b]{scotti2024recent}
Martin Scotti.
\newblock Recent advances on minimal codes.
\newblock {\em arXiv preprint 2411.11882}, pages 1--11, 2024.

\bibitem[SF20]{shi2020several}
Zexia Shi and Fang-Wei Fu.
\newblock Several families of $q$-ary minimal linear codes with $w_{\min}/
  w_{\max}\le (q- 1)/q$.
\newblock {\em Discrete Mathematics}, 343(6):111840, 2020.

\bibitem[SL21]{shi2021two}
Minjia Shi and Xiaoxiao Li.
\newblock Two classes of optimal $p$-ary few-weight codes from down-sets.
\newblock {\em Discrete Applied Mathematics}, 290:60--67, 2021.

\bibitem[Slo93]{sloane1993covering}
Neil~J.A. Sloane.
\newblock Covering arrays and intersecting codes.
\newblock {\em Journal of Combinatorial Designs}, 1(1):51--63, 1993.

\bibitem[Sma23]{smaldore2023all}
Valentino Smaldore.
\newblock All minimal $[9, 4]_2$-codes are hyperbolic quadrics.
\newblock {\em Examples and Counterexamples}, 3:100097, 2023.

\bibitem[TQLZ21]{tang2021full}
Chunming Tang, Yan Qiu, Qunying Liao, and Zhengchun Zhou.
\newblock Full characterization of minimal linear codes as cutting blocking
  sets.
\newblock {\em IEEE Transactions on Information Theory}, 67(6):3690--3700,
  2021.

\bibitem[War81]{ward1981divisible}
Harold~N. Ward.
\newblock Divisible codes.
\newblock {\em Archiv der Mathematik}, 36(1):485--494, 1981.

\bibitem[YZ25]{yu2025construction}
Cong Yu and Shixin Zhu.
\newblock Construction of new linear codes with good parameters from group
  rings and skew group rings.
\newblock {\em Discrete Mathematics}, 348(4):114349, 2025.

\end{thebibliography}

\newcommand{\etalchar}[1]{$^{#1}$}

\end{document}